\documentclass{amsart}
\usepackage[T1]{fontenc}
\usepackage[utf8]{inputenc}
\usepackage{amssymb,mathtools}
\usepackage{xurl}
\usepackage[hidelinks]{hyperref}
\usepackage{aliascnt}

\numberwithin{equation}{section}

\newtheorem{theorem}{Theorem}[section]
\newaliascnt{proposition}{theorem}
\newtheorem{proposition}[proposition]{Proposition}
\aliascntresetthe{proposition}
\newaliascnt{lemma}{theorem}
\newtheorem{lemma}[lemma]{Lemma}
\aliascntresetthe{lemma}
\newaliascnt{corollary}{theorem}

\aliascntresetthe{corollary}
\newaliascnt{claim}{theorem}

\aliascntresetthe{claim}
\theoremstyle{definition}
\newaliascnt{definition}{theorem}
\newtheorem{definition}[definition]{Definition}
\aliascntresetthe{definition}
\newaliascnt{remark}{theorem}

\aliascntresetthe{remark}
\newaliascnt{construction}{theorem}

\aliascntresetthe{construction}

\usepackage[nameinlink,noabbrev]{cleveref}
\crefname{theorem}{Theorem}{Theorems}
\Crefname{theorem}{Theorem}{Theorems}
\crefname{proposition}{Proposition}{Propositions}
\Crefname{proposition}{Proposition}{Propositions}
\crefname{lemma}{Lemma}{Lemmas}
\Crefname{lemma}{Lemma}{Lemmas}
\crefname{corollary}{Corollary}{Corollaries}
\Crefname{corollary}{Corollary}{Corollaries}
\crefname{definition}{Definition}{Definitions}
\Crefname{definition}{Definition}{Definitions}
\crefname{remark}{Remark}{Remarks}
\Crefname{remark}{Remark}{Remarks}
\crefname{claim}{Claim}{Claims}
\Crefname{claim}{Claim}{Claims}
\crefname{construction}{Construction}{Constructions}
\Crefname{construction}{Construction}{Constructions}
\crefname{section}{Section}{Sections}
\Crefname{section}{Section}{Sections}

\DeclareMathOperator{\conv}{conv}
\DeclareMathOperator{\supp}{supp}
\DeclareMathOperator{\Lip}{Lip}
\DeclareMathOperator{\sgn}{sgn}

\newcommand{\R}{\mathbb{R}}

\newcommand{\cL}{\mathcal{L}}
\newcommand{\cH}{\mathcal{H}}
\newcommand{\cK}{\mathcal{K}}
\newcommand{\cR}{\mathcal{R}}
\newcommand{\cA}{\mathcal{A}}
\newcommand{\cP}{\mathcal{P}}
\newcommand{\cI}{\mathcal{I}}

\newcommand{\norm}[1]{\left\lVert #1\right\rVert}
\newcommand{\matnorm}[1]{\left\lVert #1\right\rVert_{\mathrm F}}
\newcommand{\abs}[1]{\left\lvert #1\right\rvert}
\newcommand{\set}[1]{\left\{#1\right\}}

\newcommand{\dd}{\,\mathrm{d}}
\hypersetup{
  pdftitle={Rigidity of Unit Spheres in Finite-Dimensional Real Normed Spaces},
  pdfauthor={Jumpei Nakamura and Ryotaro Tanaka},
  pdfkeywords={Unit-sphere isometry, finite-dimensional normed space, Tingley's problem, Pl\"ucker contraction body, null Lagrangian}
}
\title[Rigidity of unit spheres]{Rigidity of Unit Spheres in\\
Finite-Dimensional Real Normed Spaces}

\author[J. Nakamura]{Jumpei Nakamura}
\address[J. Nakamura]{Department of Applied Mathematics, Graduate School of Science, Tokyo University of Science, Tokyo 162-8601, Japan}
\email{1425526@ed.tus.ac.jp}

\author[R. Tanaka]{Ryotaro Tanaka}
\address[R. Tanaka]{Katsushika Division, Institute of Arts and Sciences, Tokyo University of Science, Tokyo 125-8585, Japan}
\email{r-tanaka@rs.tus.ac.jp}

\thanks{The second author was supported by JSPS KAKENHI Grant Number JP24K06788.}
\subjclass[2020]{Primary 46B04; Secondary 46B20, 52A21}
\keywords{Unit-sphere isometry, finite-dimensional normed space, Tingley's problem, Pl\"ucker contraction body, null Lagrangian}
\date{}

\begin{document}

\begin{abstract}
We prove that finite-dimensional real normed spaces with isometric unit spheres are linearly isometric, where each sphere carries the distance induced by the ambient norm. This answers the object-level question of Kadets and Mart\'in in finite dimensions, without asserting linear extension of a prescribed sphere isometry. We associate with each norm a family of Pl\"ucker contraction bodies generated by volume-normalized maximal minors of linear contractions into finite-dimensional $\ell_\infty$ spaces. Null-Lagrangian identities show that the sphere metric determines these bodies. A finite exposed-face construction recovers almost-norming contractions from the convexified data, and compactness together with an exact volume identity yields a linear isometry.
\end{abstract}

\maketitle

\section{Introduction and main result}\label{sec:introduction}

The metric structure of a real normed space determines its affine structure: by the Mazur--Ulam theorem, every surjective isometry between real normed spaces is affine \cite{MazurUlam1932}. Mankiewicz's extension theorem gives a corresponding rigidity statement for bijective isometries between convex bodies with nonempty interior \cite{Mankiewicz1972}. It is natural to ask how much of the ambient space can be recovered from a smaller metric subspace. The unit sphere is a particularly stringent test. Its points record all directions, but its metric presentation does not include addition or scalar multiplication, and its empty interior prevents a direct application of convex-body extension theorems.

For a real normed space $X$, write
\[
  B_X=\set{x\in X:\norm{x}_X\leq1},
  \qquad
  S_X=\set{x\in X:\norm{x}_X=1}.
\]
Throughout the paper, the metric on the sphere is the restriction of the ambient norm distance,
\begin{equation}\label{eq:ambient-sphere-metric}
  d_X(x,y)=\norm{x-y}_X
  \qquad(x,y\in S_X),
\end{equation}
not an intrinsic or geodesic distance. A \emph{surjective sphere isometry} is a surjective map $\Delta:S_X\to S_Y$ satisfying
\begin{equation}\label{eq:sphere-isometry}
  \norm{\Delta(x)-\Delta(y)}_Y=\norm{x-y}_X
  \qquad(x,y\in S_X).
\end{equation}
Such a map is automatically injective. The question is whether the existence of this metric identification forces a linear isometric identification of $X$ and $Y$.

There are two distinct assertions here. Tingley's problem asks whether the \emph{prescribed map} $\Delta$ is the restriction of a surjective real-linear isometry of the ambient spaces \cite{Tingley1987}. We refer to this as the \emph{marked extension problem}. The \emph{object-level problem} asks only whether $X$ and $Y$ are linearly isometric whenever their unit spheres are isometric. Kadets and Mart\'in explicitly formulated this weaker question in \cite[Section~5]{KadetsMartin2012}, and emphasized its interest already in finite dimensions. The result of this paper answers the object-level question for arbitrary finite-dimensional real norms.

\begin{theorem}[Rigidity of unit spheres]\label{thm:main}
Let $X$ and $Y$ be finite-dimensional real normed spaces. If there exists a surjective sphere isometry $\Delta:S_X\to S_Y$, then there exists a surjective real-linear isometry $T:X\to Y$.
\end{theorem}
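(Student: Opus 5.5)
We will prove \cref{thm:main} by attaching to each finite-dimensional norm a family of convex bodies that (i) can be read off from the metric space $(S_X,d_X)$ alone, and (ii) conversely determine the norm up to linear isometry. Let $n=\dim X$; since $S_X$ is a compact topological $(n-1)$-sphere and $\Delta$ is a homeomorphism, $\dim Y=n$. For each $m\geq n$ consider the linear contractions $A:X\to\ell_\infty^m$, that is, $m$-tuples of functionals $f_1,\dots,f_m\in B_{X^*}$. After fixing a Lebesgue measure on $X$ normalized so that $\operatorname{vol}(B_X)=1$, we record the vector of maximal minors $\bigl(\det(f_{i_1},\dots,f_{i_n})\bigr)_{i_1<\dots<i_n}$ in $\Lambda^n\R^m$. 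The Pl\"ucker contraction body $\cP_m(X)$ is the closed convex hull of all such vectors. This object is invariant under linear isometries. It is also independent of the choice of linear coordinates, because of the volume normalization.

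The first main step is to show $\cP_m(X)=\cP_m(Y)$. Here we would use that a contraction $A$ into $\ell_\infty^m$ restricts to an $m$-tuple of $1$-Lipschitz functions $S_X\to[-1,1]$. Conversely, any $m$-tuple $u=(u_1,\dots,u_m)$ of $1$-Lipschitz functions on $S_X$ can be extended positively homogeneously, or by McShane, to a Lipschitz map $\tilde u$ on the cone $B_X$. The key identity is a null-Lagrangian formula. For Lipschitz $\tilde u:B_X\to\R^m$, the integral $\int_{B_X}\det D\tilde u_I\,dx$ over each $n$-subset $I$ depends only on the boundary values $u|_{S_X}$, since the Jacobian determinant is a divergence. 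We would express it as a boundary integral computable from the metric data, for instance through the degree formula or as the volume (with multiplicity) enclosed by $u_I(S_X)$. Averaging $D\tilde u$ over $B_X$ and using convexity of the set of contractions, Jensen-type arguments should then show that $\cP_m(X)$ equals the closed convex hull of the normalized boundary-minor vectors $\bigl(\int_{B_X}\det D\tilde u_I\bigr)_I$, taken over all $1$-Lipschitz $u:S_X\to\ell_\infty^m$. One subtlety is that $1$-Lipschitz for $d_X$ must be the right notion, and this holds because $\norm{\cdot}_X$ restricted to the sphere is the ambient metric. The normalization by $\operatorname{vol}(B_X)$ is also intrinsic, since the boundary integrals scale correctly. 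Composing with $\Delta$ then transports the data, which gives $\cP_m(X)=\cP_m(Y)$.

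The second step, which we expect to be the main obstacle, is recovering the norm from the bodies. We would take $m$ large, approximate $B_{X^*}$ by a polytope with $m$ vertices $f_i$, and consider an exposed face of $\cP_m(X)$ in the direction of the minor vector of $(f_i)$. We then need to show that this face is attained by genuine contractions $A$ rather than only by convex combinations, which requires a finite exposed-face construction with a carefully chosen linear functional on $\Lambda^n\R^m$. Its maximizers should be minor vectors of almost-norming contractions $A_m:X\to\ell_\infty^m$. The corresponding maximizers on the $Y$ side give contractions $B_m:Y\to\ell_\infty^m$ with the same minor vectors. Equal Pl\"ucker coordinates mean the images $A_m(X)$ and $B_m(Y)$ coincide as subspaces, with matching volume normalization. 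This yields linear maps $T_m=B_m^{-1}A_m:X\to Y$ with $\abs{\det T_m}=1$ in normalized volumes, and with $\norm{T_mx}_Y\geq(1-\varepsilon_m)\norm{x}_X$ because $B_m$ is a contraction and $A_m$ almost norms.

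By compactness, since $T_m$ is bounded via the determinant bound and the lower estimate, a subsequence converges to some $T$ with $\norm{Tx}_Y\geq\norm{x}_X$ and $\operatorname{vol}(T B_X)=\operatorname{vol}(B_Y)$. Then $T^{-1}B_Y\subseteq B_X$, and the exact volume identity forces $T B_X=B_Y$, so $T$ is a surjective linear isometry.
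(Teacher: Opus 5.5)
Your architecture is essentially the paper's: Pl\"ucker contraction bodies; McShane extension, null-Lagrangian trace invariance and a Jensen argument for equality of the bodies; an exposed face; factorization from proportional Pl\"ucker vectors; and a compactness and volume limit. Your final step is sound, since the lower bound together with $\abs{\det T_m}=1$ does bound $T_m$, and equal volumes force $TB_X=B_Y$. The genuine gap is the step you yourself call the main obstacle. It is stated only as a hope, and the difficulty is not where you locate it.

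Attainment by genuine contractions is automatic. An extreme point of the exposed face is extreme in the body, so by Milman's converse it lies in both compact raw sets. This is also what supplies the common point for $X$ and $Y$ that you use tacitly. What is not automatic is that this common point is the minor vector of an almost-norming contraction. You do not get to choose this point, and it may be represented by contractions other than the one you built. The paper secures it by arranging that \emph{every} contraction whose signed minor vector maximizes $\ell$ almost norms the sphere and has a nonzero leading minor; this is property (iv) of \Cref{prop:satellite-package}. Your proposed direction is the minor vector of a polytope approximation $F$ of $B_{X^*}$. By Cauchy--Binet this gives the objective $A\mapsto\det(F^{\top}A)$. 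At a maximizer, each row is only forced to norm some vector determined by the remaining rows. Nothing in your sketch controls whether these vectors are spread over the sphere, so almost-norming of the maximizers is not established.

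The missing idea is the satellite-row functional of \Cref{sec:finite-recovery}. One fixes the coefficient net $C$ \emph{before} optimizing, and then maximizes $\abs{P_w}$ over all rows in $B_{r^*}$, where $P_w(B,(s_c))=w\det B+\sum_{c\in C}\sum_{i}c_i\det B[i\leftarrow s_c]$. This works for three reasons:
\begin{enumerate}
\item The large weight $w$ forces $\abs{\det B}>d_r/2$ at every maximizer. This gives the uniform bound $r(B^{-1}c)\leq 2n\norm{c}_\infty$, so $B^{-1}C$ is a fine net of $S_r$ whichever maximizer occurs.
\item Since $P_w$ is affine in each $s_c$, maximality forces $\abs{s_c(B^{-1}c)}=r(B^{-1}c)$.
\item $v_rP_w$ is a linear functional of the minor vector, so the first two conclusions hold at every raw maximizer.
\end{enumerate}

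There are two smaller points.
\begin{itemize}
\item A determinant on $X^*$ requires an orientation, not only a Lebesgue measure, so your unsymmetrized body is defined only up to a global sign. Your degree-formula transport is a legitimate alternative to the paper's weak-Piola proof that $\det D\widehat\delta$ has constant sign, and it needs only that $\Delta$ is a homeomorphism of spheres. However, it gives $\cP_m(X)=\deg(\Delta)\,\cP_m(Y)$ with $\deg\Delta=\pm1$. You must either symmetrize by $\pm$, as the paper does, or track this sign.
\item Only the McShane extension is $1$-Lipschitz, so only it places $D\tilde u$ among the contractions for the Jensen inclusion. The positively homogeneous extension is not $1$-Lipschitz in general.
\end{itemize}
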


No smoothness, strict convexity, or polyhedrality is assumed, and equality of dimensions follows from the sphere isometry. Since a surjective linear isometry restricts to an isometry of unit spheres, the unit-sphere metric is a complete invariant of linear isometry among finite-dimensional real normed spaces. The isometry $T$ is not asserted to agree with the prescribed map $\Delta$ on $S_X$; Tingley's marked extension problem is not resolved here.

\subsection*{Relation to earlier work}

Tingley's original finite-dimensional result establishes preservation of antipodal points \cite{Tingley1987}. Subsequent work has obtained the stronger marked conclusion by exploiting particular geometric or algebraic structure. Kadets and Mart\'in proved linear extension for finite-dimensional polyhedral spaces \cite[Theorem~4.5]{KadetsMartin2012}. Banakh proved that every two-dimensional real Banach space has the Mazur--Ulam property: every surjective isometry from its unit sphere onto the unit sphere of a real Banach space extends to a surjective real-linear isometry \cite{Banakh2022}. This settles the marked problem in dimension two without a regularity assumption on the norm.

There are also object-level precedents within structured categories. The second author proved that two finite-dimensional $C^*$-algebras have isometric unit spheres if and only if they are $*$-isomorphic \cite[Corollary~2.5]{Tanaka2017}. This gives algebraic rigidity within that category. In a broader operator-algebraic setting, Mori and Ozawa established the Mazur--Ulam property for unital $C^*$-algebras \cite{MoriOzawa2020}. Peralta and \v{S}varc extended this result to unital JB$^*$-algebras \cite{PeraltaSvarc2025}. Peralta's survey describes the development of Tingley's problem for operator algebras \cite{Peralta2018}.

Hatori proved that every surjective isometry from the unit sphere of a uniform algebra onto that of an arbitrary complex Banach space extends to a surjective real-linear isometry \cite{Hatori2022}. Cabezas, Cueto-Avellaneda, Hirota, Miura, and Peralta established the same conclusion for commutative JB$^*$-triples \cite{CabezasEtAl2022}.

Other approaches extract geometric information about the unit ball from the sphere metric, including extremal and facial structure \cite{Tanaka2014,CamposJimenezGarciaPacheco2021}. A complementary local-to-global result of Cabello S\'anchez shows that, for finite-dimensional strictly convex spaces, a surjective sphere isometry that agrees with a linear map on a nonempty relatively open part of the sphere agrees with that map on the whole sphere \cite[Theorem~3.12]{CabelloSanchez2019}. The extension and classification results just discussed use low dimension or additional geometric, algebraic, or local-linearity hypotheses. Our argument applies to arbitrary finite-dimensional real norms and recovers the unit ball through a family of auxiliary convex bodies.

\subsection*{Proof architecture}

The proof separates the extraction of metric invariants from the recovery of a norm. In positive dimension, the radial extension
\[
  \widehat\Delta(0)=0,
  \qquad
  \widehat\Delta(x)=\norm{x}_X
    \Delta\!\left(\frac{x}{\norm{x}_X}\right)
  \quad(x\neq0)
\]
is a bi-Lipschitz homeomorphism from $X$ onto $Y$ and sends $B_X$ onto $B_Y$. It first allows us to recover the common dimension. It subsequently provides a comparison map for integral identities. After choosing linear coordinates, we work with two norms $p$ and $q$ on $E=\R^n$ and write $\delta:S_p\to S_q$ for the transported sphere isometry.

For a norm $r$ on $E$, let $v_r$ be the Lebesgue volume of $B_r$. If $A:E\to\ell_\infty^N$ is linear, with $N\geq n$, let $\mathbf m(A)$ denote the vector of its maximal minors, indexed by the increasing $n$-element subsets of $\{1,\ldots,N\}$. The associated \emph{Pl\"ucker contraction body} is
\[
  \cK_r^N
  =\conv\bigl\{\,\pm v_r\mathbf m(A):
        \norm{Ax}_\infty\leq r(x)\text{ for all }x\in E\,\bigr\}.
\]
It is a compact centrally symmetric convex set. The volume normalization is part of the invariant, and the signs allow comparison without selecting an orientation. Our analytic task, completed in \Cref{thm:body-equality}, is to prove
\[
  \cK_p^N=\cK_q^N\qquad(N\geq n);
\]
our convex-geometric task, completed in \Cref{thm:coordinate-rigidity}, is to recover a linear isometry from equality of this entire family.

To establish the first assertion, fix a $q$-contraction $A:E\to\ell_\infty^N$. Each coordinate of $A\circ\delta$ is $1$-Lipschitz on $S_p$, so McShane's extension theorem supplies a $1$-Lipschitz map $F:(E,p)\to\ell_\infty^N$ with that boundary trace \cite{McShane1934}. Rademacher's theorem implies that $DF(x)$ is a $p$-contraction almost everywhere \cite{Rademacher1919}. Consequently, the average of $v_p\mathbf m(DF)$ over $B_p$ belongs to $\cK_p^N$. The radial comparison map $A\circ\widehat\delta$ need not be a contraction, but it has the same boundary trace as $F$.

Maximal minors are classical null Lagrangians \cite{BallCurrieOlver1981}. We prove the boundary-trace identity needed here for Lipschitz maps on an arbitrary norm ball, and use it to replace $F$ by $A\circ\widehat\delta$ in the integral of each minor. A weak Piola argument then shows that the Jacobian sign of $\widehat\delta$ is constant almost everywhere. Signed change of variables identifies the resulting integral with $s_\delta v_q\mathbf m(A)$, where $s_\delta\in\{-1,1\}$. Central symmetry gives $\cK_q^N\subseteq\cK_p^N$, and the inverse sphere isometry gives the reverse inclusion. The Piola identity is classical; see \cite{KupfermanShachar2019} for a geometric treatment. We use these identities to transfer the contraction data between the two norms without linearizing $\delta$.

The recovery step must retain more information than membership in a convex hull. At each accuracy $0<\varepsilon<1$, we optimize a dual frame jointly with finitely many additional functionals, called satellite rows. The construction produces a supporting functional for which every raw maximizing configuration has a nonzero leading minor and almost norms the target sphere. An extreme point of the common supporting face belongs to both compact raw generating sets, by Milman's converse to the Krein--Milman theorem. A fixed nonzero leading minor permits Cramer's rule to compare the two representing matrices. This yields a bijective linear map $L_\varepsilon:E\to E$ such that
\[
  (1-\varepsilon)q(L_\varepsilon x)\leq p(x)
  \quad(x\in E),
  \qquad
  v_p\abs{\det L_\varepsilon}=v_q.
\]
The norm estimate is one-sided, but the determinant--volume identity is exact. Finite-dimensional compactness gives an invertible limit $L$ with $L(B_p)\subseteq B_q$ and equal volumes. Proper inclusion is then impossible for these convex bodies. Hence $L(B_p)=B_q$, which is the desired linear isometric equivalence. The proof thus combines classical analytic identities with a finite convex-recovery mechanism.

\subsection*{Relation to the preliminary report}
An earlier, non-peer-reviewed account of this work appeared as the AI-assisted \emph{Sphere Rigidity} Brief Report, Draft~R2 \cite{SphereRigidityBrief2026}. The present article builds on that report and provides a self-contained treatment with full proofs, expanded exposition, and a fuller comparison with earlier work.

\subsection*{Organization}
\Cref{sec:preliminaries} gives the preliminary reductions and coordinate models, and \Cref{sec:plucker-bodies} introduces the contraction bodies and their averaged derivative data. \Cref{sec:trace} proves boundary-trace invariance. \Cref{sec:orientation} establishes the orientation formula and uses it to identify the contraction bodies of the two norms. Finite recovery is developed in \Cref{sec:finite-recovery}; compactness and the proof of \Cref{thm:main} follow in \Cref{sec:compact-limit}. \Cref{sec:tingley} discusses the remaining marked problem and further questions.

\section{Preliminary reductions and coordinate models}\label{sec:preliminaries}

Let $\Delta:S_X\to S_Y$ be a surjective sphere isometry. Since $S_X=\varnothing$ if and only if $X=\{0\}$, either both spaces are zero, in which case the conclusion is immediate, or both are nonzero. We henceforth work in the latter case.

Define the radial extension by
\begin{equation}\label{eq:radial-extension}
  \widehat\Delta(0)=0,
  \qquad
  \widehat\Delta(x)=\norm{x}_X
    \Delta\!\left(\frac{x}{\norm{x}_X}\right)
  \quad(x\neq0).
\end{equation}
It preserves the norm of each vector:
\begin{equation}\label{eq:radial-norm}
  \norm{\widehat\Delta(x)}_Y=\norm{x}_X
  \qquad(x\in X).
\end{equation}

The bi-Lipschitz property of this extension is standard; see \cite[Section~5]{KadetsMartin2012}. We record the elementary estimate needed below.

\begin{proposition}[Bi-Lipschitz radial extension]\label{prop:radial-bilip}
The map $\widehat\Delta:X\to Y$ is a bijection with inverse $\widehat{\Delta^{-1}}$. Both maps are $3$-Lipschitz, and $\widehat\Delta(B_X)=B_Y$.
\end{proposition}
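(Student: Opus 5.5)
We verify the three assertions directly from the definition \eqref{eq:radial-extension} and the norm identity \eqref{eq:radial-norm}.

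\emph{Bijectivity.} For $x\neq0$, the vector $y=\widehat\Delta(x)$ satisfies $\norm{y}_Y=\norm{x}_X>0$ and $y/\norm{y}_Y=\Delta(x/\norm{x}_X)$. Hence
\[
  \widehat{\Delta^{-1}}(y)=\norm{y}_Y\,\Delta^{-1}\!\left(\frac{y}{\norm{y}_Y}\right)=\norm{x}_X\,\frac{x}{\norm{x}_X}=x .
\]
Both maps fix $0$. Thus $\widehat{\Delta^{-1}}\circ\widehat\Delta=\mathrm{id}_X$, and by symmetry (since $\Delta^{-1}$ is also a surjective sphere isometry) $\widehat\Delta\circ\widehat{\Delta^{-1}}=\mathrm{id}_Y$. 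The equality $\widehat\Delta(B_X)=B_Y$ then follows from norm preservation and bijectivity: $\widehat\Delta(B_X)\subseteq B_Y$ and $\widehat{\Delta^{-1}}(B_Y)\subseteq B_X$.

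\emph{Lipschitz estimate.} By symmetry it suffices to treat $\widehat\Delta$.

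If one of the points is $0$, say $y=0$, then $\norm{\widehat\Delta(x)-0}_Y=\norm{x}_X$, so the map is $1$-Lipschitz there.

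Otherwise take $x,y\neq0$ and write $u=x/\norm{x}_X$, $w=y/\norm{y}_X$, and assume without loss of generality that $\norm{x}_X\geq\norm{y}_X$. Then
\[
  \widehat\Delta(x)-\widehat\Delta(y)=\norm{x}_X\bigl(\Delta(u)-\Delta(w)\bigr)+\bigl(\norm{x}_X-\norm{y}_X\bigr)\Delta(w).
\]
The first term has norm $\norm{x}_X\norm{u-w}_X$, using the isometry property \eqref{eq:sphere-isometry}. The second term has norm at most $\norm{x-y}_X$ by the reverse triangle inequality.

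The key step is the standard angular estimate $\norm{x}_X\norm{u-w}_X\leq2\norm{x-y}_X$. It follows from
\[
  \norm{x}_X(u-w)=(x-y)+\bigl(\norm{y}_X-\norm{x}_X\bigr)w ,
\]
which gives
\[
  \norm{x}_X\norm{u-w}_X\leq\norm{x-y}_X+\abs{\norm{y}_X-\norm{x}_X}\leq2\norm{x-y}_X .
\]
Here it matters that we scale by the larger norm $\norm{x}_X$, so that the remaining radial term is controlled. Summing the two bounds gives the constant $3$.

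The only delicate point is choosing to factor out the larger of the two norms. Had we factored out the smaller one, the angular term would carry the wrong radius and the estimate would not close as cleanly. Beyond that, the argument is elementary.
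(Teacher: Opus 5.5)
Your proof is correct and is essentially the paper's argument: you split $\widehat\Delta(x)-\widehat\Delta(y)$ into a radial part of norm $\bigl|\norm{x}_X-\norm{y}_X\bigr|$ and an angular part bounded via the sphere isometry and an identity such as $\norm{x}_X(u-w)=(x-y)+(\norm{y}_X-\norm{x}_X)w$, arriving at $\norm{x-y}_X+2\bigl|\norm{x}_X-\norm{y}_X\bigr|\leq3\norm{x-y}_X$. The only difference is that the paper puts the \emph{smaller} radius $s$ on the angular term, using $s(u-v)=(x-y)-(r-s)u$, and this works equally well---so your closing remark that factoring out the smaller norm would not close is mistaken, and in fact your own estimate never uses the ordering $\norm{x}_X\geq\norm{y}_X$.
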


\begin{proof}
The inverse formula and the assertion about unit balls follow directly from \eqref{eq:radial-extension} and \eqref{eq:radial-norm}. For the Lipschitz estimate, write $x=ru$ and $y=sv$ with $r\geq s>0$ and $u,v\in S_X$. Then
\begin{align*}
  \norm{\widehat\Delta(x)-\widehat\Delta(y)}_Y
  &\leq r-s+s\norm{u-v}_X\\
  &\leq\norm{x-y}_X+2(r-s)
  \leq3\norm{x-y}_X.
\end{align*}
Here $s(u-v)=(x-y)-(r-s)u$ and $r-s\leq\norm{x-y}_X$. If one vector is zero, the estimate follows from \eqref{eq:radial-norm}. The same argument applies to $\Delta^{-1}$.
\end{proof}

The equality of dimensions is now a standard consequence of bi-Lipschitz invariance of Hausdorff dimension and equivalence of finite-dimensional norms:
\begin{equation}\label{eq:dimension-recovery}
  \dim_{\R}X=\dim_{\cH}B_X
  =\dim_{\cH}B_Y=\dim_{\R}Y.
\end{equation}
For the underlying Hausdorff-measure facts, see \cite[Sections~2.2 and~2.4.1]{EvansGariepy2015}. The same radial extension will serve as a comparison map in the integral identities below.

Set
\[
  n=\dim_{\R}X=\dim_{\R}Y\geq1,
  \qquad E=\R^n,
\]
and choose linear isomorphisms
\[
  e_X:E\longrightarrow X,
  \qquad
  e_Y:E\longrightarrow Y.
\]
Pull the two norms back to $E$ by setting
\begin{equation}\label{eq:model-norms}
  p(x)=\norm{e_Xx}_X,
  \qquad
  q(x)=\norm{e_Yx}_Y.
\end{equation}
For any norm $r$ on $E$, we use the notation
\[
  B_r=\set{x\in E:r(x)\leq1},
  \qquad
  S_r=\set{x\in E:r(x)=1},
  \qquad
  v_r=\cL^n(B_r),
\]
where $\cL^n$ denotes Lebesgue measure on $E$.

Conjugating $\Delta$ by $e_X$ and $e_Y$ gives a surjective map
\begin{equation}\label{eq:model-sphere-isometry}
  \delta:S_p\longrightarrow S_q
\end{equation}
satisfying
\[
  q(\delta(u)-\delta(v))=p(u-v)
  \qquad(u,v\in S_p).
\]

We equip $E$ with its standard Euclidean inner product and use the associated Lebesgue measure, Fr\'echet derivatives, distributional derivatives, and convolutions. Linear maps $E\to\R^N$ are represented by matrices whose rows are the coordinate functionals of the output. We write $\norm{\cdot}_2$ and $\norm{\cdot}_\infty$ for the Euclidean and supremum norms on coordinate spaces. Since all norms on a finite-dimensional space are equivalent, for each norm $r$ on $E$ there exist constants $0<\lambda_r\leq\Lambda_r$ such that
\begin{equation}\label{eq:model-comparison}
  \lambda_r\norm{x}_\infty\leq r(x)
  \leq\Lambda_r\norm{x}_\infty
  \qquad(x\in E).
\end{equation}
These choices affect only auxiliary estimates; all metric statements continue to be expressed in terms of $p$ and $q$.

We shall associate to each norm on $E$ a family of convex sets and show that this family determines its linear isometry class.

\section{Pl\"ucker contraction bodies}\label{sec:plucker-bodies}

Fix a norm $r$ on $E=\R^n$ and an integer $N\geq n$. We form a centrally symmetric convex set from the volume-normalized maximal minors of linear contractions $(E,r)\to\ell_\infty^N$, where $\ell_\infty^N$ denotes $\R^N$ with its supremum norm.

Maximal minors play two roles: their integrals depend only on boundary data (\Cref{thm:trace-invariance}), while their ratios recover row coordinates when a leading minor is nonzero (\Cref{lem:anchored-factorization}). The $\ell_\infty^N$ target allows scalar Lipschitz extensions to be combined without increasing the Lipschitz constant.

\subsection{Maximal-minor data}

Let $\cI(n,N)$ be the set of increasing $n$-element subsets of $\{1,\dots,N\}$, and let
\[
  \cP_{n,N}=\R^{\cI(n,N)}
\]
be the corresponding Pl\"ucker-coordinate space. If $A:E\to\R^N$ is linear, we write $A=(a_1,\ldots,a_N)$ for its ordered row functionals. For $I\in\cI(n,N)$, the map $A_I:E\to\R^n$ is obtained by retaining the rows indexed by $I$ in increasing order. All maximal-minor signs below refer to this fixed order.

\begin{definition}[Normalized Pl\"ucker vector]\label{def:normalized-plucker}
For a linear map $A:E\to\R^N$, define its maximal-minor vector by
\[
  \mathbf m(A)=\bigl(\det A_I\bigr)_{I\in\cI(n,N)}\in\cP_{n,N}.
\]
The normalized Pl\"ucker vector of $A$ relative to the norm $r$ is
\begin{equation}\label{eq:normalized-plucker}
  \Pi_r(A)=v_r\mathbf m(A).
\end{equation}
\end{definition}

\begin{definition}[Pl\"ucker contraction body]\label{def:plucker-body}
A linear map $A:E\to\ell_\infty^N$ is called an \emph{$r$-contraction} if
\[
  \norm{Ax}_\infty\leq r(x)
  \qquad(x\in E).
\]
The signed maximal-minor vectors before convexification form the \emph{raw Pl\"ucker set}; its convex hull is the \emph{Pl\"ucker contraction body}:
\begin{equation}\label{eq:raw-and-body}
  \cR_r^N
  =\set{\pm\Pi_r(A):A\text{ is an }r\text{-contraction}},
  \qquad
  \cK_r^N=\conv(\cR_r^N)\subset\cP_{n,N}.
\end{equation}
\end{definition}

The volume factor makes this construction invariant under linear changes of coordinates. Indeed, let $C\in GL(n,\R)$ and set $\widetilde r(x)=r(Cx)$. Then
\[
  v_{\widetilde r}=\frac{v_r}{\abs{\det C}},
  \qquad
  \mathbf m(AC)=(\det C)\mathbf m(A).
\]
Moreover, $A$ is an $r$-contraction if and only if $AC$ is a $\widetilde r$-contraction. Hence
\[
  \Pi_{\widetilde r}(AC)=\sgn(\det C)\Pi_r(A),
  \qquad
  \cK_{\widetilde r}^N=\cK_r^N.
\]
Thus volume normalization cancels the determinant magnitude, and the two signs remove its orientation. The same cancellation will give the exact volume identity in \Cref{lem:anchored-factorization}.

\begin{proposition}[Compactness]\label{prop:plucker-compact}
The set $\cR_r^N$ is nonempty, compact, and centrally symmetric. Its convex hull $\cK_r^N$ is compact and centrally symmetric, with nonempty interior in $\cP_{n,N}$.
\end{proposition}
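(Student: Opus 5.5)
Nonemptiness and central symmetry are immediate: the zero map is an $r$-contraction, so $0\in\cR_r^N$, and the set is symmetric by construction because both signs $\pm\Pi_r(A)$ are included. The convex hull of a symmetric set is symmetric, so only compactness and the interior claim need work.

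For compactness, identify the set $\mathcal C_r^N$ of $r$-contractions $A:E\to\ell_\infty^N$ with a subset of the matrix space $\R^{N\times n}$. It is closed, being an intersection over $x\in E$ of the closed conditions $\norm{Ax}_\infty\le r(x)$. It is bounded: each row $a_j$ satisfies $\abs{a_j(x)}\le r(x)\le\Lambda_r\norm{x}_\infty$ by \eqref{eq:model-comparison}, so every entry is at most $\Lambda_r$. Hence $\mathcal C_r^N$ is compact. Since $A\mapsto\pm v_r\mathbf m(A)$ is continuous (the coordinates are polynomials), $\cR_r^N$ is the union of two continuous images of a compact set and is therefore compact. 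In the finite-dimensional space $\cP_{n,N}$, Carathéodory's theorem shows that the convex hull of a compact set is compact, because it is the continuous image of the compact set $\Delta_{D}\times(\cR_r^N)^{D+1}$ with $D=\dim\cP_{n,N}$. So $\cK_r^N$ is compact and centrally symmetric.

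For nonempty interior, which is the only step needing a construction, it suffices to show that $\cR_r^N$ spans $\cP_{n,N}$. A symmetric convex set whose linear span is the whole space contains a small cross-polytope around $0$ and hence has interior. Fix $I\in\cI(n,N)$. Choose a linear isomorphism $A_0:E\to\R^n$ with $\norm{A_0x}_\infty\le r(x)$, for instance $A_0=\lambda_r\,\mathrm{id}$ by \eqref{eq:model-comparison}. Let $A^{(I)}$ place the rows of $A_0$ in the positions of $I$ (in increasing order) and zero rows elsewhere. Then $A^{(I)}$ is an $r$-contraction. Any $n\times n$ submatrix using a row outside $I$ contains a zero row, so $\mathbf m(A^{(I)})=\det(A_0)\,e_I$ with $\det A_0\neq0$. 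Thus every standard basis vector $e_I$ of $\cP_{n,N}$ is a nonzero multiple of an element of $\cR_r^N$. Together with symmetry this gives $\pm c\,e_I\in\cK_r^N$ for some $c>0$, and convexity puts the cross-polytope $\conv\{\pm c\,e_I\}$, which has nonempty interior, inside $\cK_r^N$.
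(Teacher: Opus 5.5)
Your proposal is correct and follows essentially the same route as the paper: the contraction matrices form a compact set (which the paper identifies with the product of $N$ copies of the dual unit ball), continuity of the minor map and Carath\'eodory give compactness of $\cR_r^N$ and $\cK_r^N$, and planting an invertible admissible block in the rows indexed by $I$ gives $\pm a e_I\in\cR_r^N$ for some $a>0$, hence nonempty interior. Your explicit choice $A_0=\lambda_r\,\mathrm{id}$ is simply a concrete instance of the paper's invertible matrix $B$ with rows in the dual unit ball.
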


\begin{proof}
The contraction matrices form the compact product of $N$ copies of the dual unit ball of $(E,r)$. Since $A\mapsto\Pi_r(A)$ is continuous, its image and the union of that image with its negative are compact. The convex hull of a compact set in a finite-dimensional space is compact by Carath\'eodory's theorem \cite[Theorem~3.1 and Corollary~3.1]{Gruber2007}. Nonemptiness and central symmetry follow from the definition.

For the interior assertion, fix an invertible matrix $B$ with rows in the dual unit ball and put $a=v_r\abs{\det B}>0$. Placing the rows of $B$ at the indices in $I$ and zero rows elsewhere gives an $r$-contraction whose only nonzero maximal minor is the $I$th one. Thus $\pm a e_I\in\cR_r^N$ for every $I\in\cI(n,N)$, where $(e_I)$ is the coordinate basis of $\cP_{n,N}$. The convex hull of these points has nonempty interior and is contained in $\cK_r^N$.
\end{proof}

\subsection{Boundary extensions and Pl\"ucker averages}

Fix a $q$-contraction $A:(E,q)\to\ell_\infty^N$. The map $g_A=A\circ\delta:S_p\to\ell_\infty^N$ is $1$-Lipschitz, since
\[
  \norm{g_A(u)-g_A(v)}_\infty
  \leq q(\delta(u)-\delta(v))=p(u-v).
\]
Write $g_A=(h_1,\ldots,h_N)$. By McShane's theorem \cite{McShane1934}, the coordinatewise formula
\begin{equation}\label{eq:boundary-extension}
  F_A=(F_{A,1},\ldots,F_{A,N}),
  \qquad
  F_{A,j}(x)=\inf_{u\in S_p}\bigl(h_j(u)+p(x-u)\bigr)
\end{equation}
defines a $1$-Lipschitz extension $(E,p)\to\ell_\infty^N$ of $g_A$. Rademacher's theorem \cite{Rademacher1919,EvansGariepy2015} gives differentiability almost everywhere, and the Lipschitz bound implies
\[
  \norm{DF_A(x)h}_\infty\leq p(h)
  \qquad(h\in E)
\]
at each differentiability point. Thus $DF_A(x)$ is a $p$-contraction almost everywhere.

\begin{definition}[Pl\"ucker average]\label{def:plucker-average}
Let $F:E\to\ell_\infty^N$ be Lipschitz. Its Pl\"ucker average over $B_r$ is
\begin{equation}\label{eq:plucker-average}
  \cA_r(F)
  =\frac{1}{v_r}\int_{B_r}\Pi_r(DF(x))\dd x
  =\int_{B_r}\mathbf m(DF(x))\dd x
  \in\cP_{n,N}.
\end{equation}
The integral is taken coordinatewise in the finite-dimensional space $\cP_{n,N}$. We use a measurable representative of the weak derivative of $F$, which agrees almost everywhere with the classical derivative, and assign the value $0$ on the remaining null set; this convention does not change the integral.
\end{definition}

The derivative of a Lipschitz map is measurable and essentially bounded, so its maximal minors are integrable on $B_r$. We shall use the following elementary fact about convex averages.

\begin{lemma}[Averages in closed convex sets]\label{lem:convex-average}
Let $K$ be a nonempty closed convex subset of a finite-dimensional real vector space, let $\mu$ be a probability measure, and let $h$ be an integrable map satisfying $h(\omega)\in K$ for $\mu$-almost every $\omega$. Then
\[
  \int h\,\dd\mu\in K.
\]
\end{lemma}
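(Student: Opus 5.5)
The plan is to argue by separation. Let $c=\int h\,\dd\mu$, which is a well-defined vector in the finite-dimensional space $V$ since $h$ is integrable; coordinatewise integration agrees with integration against any linear functional, so $\varphi(c)=\int\varphi\circ h\,\dd\mu$ for every $\varphi\in V^*$. Suppose, for contradiction, that $c\notin K$. Because $K$ is nonempty, closed, and convex in a finite-dimensional space, the strict separation theorem (for instance via the nearest-point projection onto $K$ after fixing any inner product on $V$) yields a linear functional $\varphi$ and a real number $\alpha$ with
\[
  \varphi(y)\leq\alpha<\varphi(c)
  \qquad(y\in K).
\]

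Next apply $\varphi$ to the hypothesis. For $\mu$-almost every $\omega$ we have $h(\omega)\in K$, hence $\varphi(h(\omega))\leq\alpha$ almost everywhere. Integrating this scalar inequality against the probability measure $\mu$ gives $\int\varphi\circ h\,\dd\mu\leq\alpha\,\mu(\Omega)=\alpha$. Combined with linearity of the integral, this reads $\varphi(c)\leq\alpha$, contradicting the strict inequality above. Hence $c\in K$.

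There is no real obstacle. The only points needing care are that the exceptional null set does not affect the integral, which holds since we integrate a scalar inequality valid almost everywhere, and that $\mu$ has total mass one, which is used exactly once to convert $\int\alpha\,\dd\mu$ into $\alpha$. Closedness of $K$ is essential for strict separation; nonemptiness is used only so that the separation statement is meaningful, and it is in any case implied by the hypothesis whenever $\mu$ is a probability measure.
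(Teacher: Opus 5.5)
Your proposal is correct and follows the paper's proof essentially verbatim: strict separation of $\int h\,\dd\mu$ from the closed convex set $K$, followed by integrating the almost-everywhere inequality $\varphi(h)\leq\alpha$ against the probability measure to reach a contradiction.
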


\begin{proof}
If the integral did not belong to $K$, the finite-dimensional separation theorem would provide a linear functional $\lambda$ and a number $\alpha$ such that $\lambda\leq\alpha$ on $K$ but $\lambda(\int h\,\dd\mu)>\alpha$. Since $h\in K$ almost everywhere,
\[
  \lambda\!\left(\int h\,\dd\mu\right)
  =\int\lambda(h)\,\dd\mu\leq\alpha,
\]
a contradiction.
\end{proof}

\begin{proposition}[The average lies in the contraction body]\label{prop:average-in-body}
If $F:E\to\ell_\infty^N$ is $1$-Lipschitz with respect to $r$, then
\[
  \cA_r(F)\in\cK_r^N.
\]
In particular, the extension $F_A$ from \eqref{eq:boundary-extension} satisfies
\[
  \cA_p(F_A)\in\cK_p^N.
\]
\end{proposition}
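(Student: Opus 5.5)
The plan is to write $\cA_r(F)$ as an average of a function taking values almost everywhere in $\cK_r^N$, and then apply \Cref{lem:convex-average}. By \eqref{eq:plucker-average},
\[
  \cA_r(F)=\frac{1}{v_r}\int_{B_r}\Pi_r(DF(x))\dd x .
\]
So if $\mu$ is normalized Lebesgue measure $\cL^n\llcorner B_r/v_r$, a probability measure since $v_r>0$, then $\cA_r(F)=\int h\,\dd\mu$ with $h(x)=\Pi_r(DF(x))$. The target set $K=\cK_r^N$ is nonempty, closed and convex by \Cref{prop:plucker-compact}. The space $\cP_{n,N}$ is finite-dimensional.

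The key steps are measurability, integrability and the pointwise membership. Since $F$ is Lipschitz, the chosen representative of $DF$ is measurable and essentially bounded. Its maximal minors are polynomials in its entries, so $h$ is measurable and bounded on $B_r$, hence $\mu$-integrable. For membership, Rademacher's theorem gives differentiability at almost every $x$, and there the weak and classical derivatives agree almost everywhere. At such a point the $1$-Lipschitz bound
\[
  \norm{F(x+th)-F(x)}_\infty\leq t\,r(h)\qquad(t>0)
\]
gives, after dividing by $t$ and letting $t\to0^+$, $\norm{DF(x)h}_\infty\leq r(h)$. So $DF(x)$ is an $r$-contraction, and $h(x)=\Pi_r(DF(x))\in\cR_r^N\subseteq\cK_r^N$. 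This holds for almost every $x\in B_r$. The null set where the representative was set to $0$ is irrelevant, although $0\in\cK_r^N$ anyway by central symmetry and convexity. \Cref{lem:convex-average} then yields $\cA_r(F)\in\cK_r^N$.

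For the second assertion, take $r=p$ and $F=F_A$. The McShane extension \eqref{eq:boundary-extension} is $1$-Lipschitz from $(E,p)$ into $\ell_\infty^N$, since each coordinate is an infimum of $1$-Lipschitz functions $x\mapsto h_j(u)+p(x-u)$. So the first part applies. There is no real obstacle here. The only point needing care is the almost-everywhere identification of the weak derivative with the classical one, so that the contraction property verified at points of differentiability transfers to the representative used in \Cref{def:plucker-average}.
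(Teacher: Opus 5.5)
Your proposal is correct and follows the paper's proof: at almost every differentiability point $DF(x)$ is an $r$-contraction, so $\Pi_r(DF(x))\in\cR_r^N\subseteq\cK_r^N$, and \Cref{lem:convex-average} applied to the normalized Lebesgue measure on $B_r$ gives the claim. Your added details (measurability and boundedness of the minors, the difference-quotient derivation of the contraction bound, and the $1$-Lipschitz property of the McShane extension) are points the paper treats in the surrounding text rather than in the proof itself.
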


\begin{proof}
At almost every differentiability point, $DF(x)$ is an $r$-contraction, and therefore
\[
  \Pi_r(DF(x))\in\cR_r^N\subseteq\cK_r^N.
\]
The measure $v_r^{-1}\mathbf1_{B_r}\cL^n$ is a probability measure. Apply \Cref{lem:convex-average} to the closed convex set $\cK_r^N$.
\end{proof}

We next show that $\cA_p(F_A)$ depends only on the trace on $S_p$, so that it can be computed using the radial comparison map $A\circ\widehat\delta$.

\section{Boundary-trace invariance}\label{sec:trace}

The main conclusion of this section is the boundary-trace identity
\[
  F=G\text{ on }S_r\quad\Longrightarrow\quad\cA_r(F)=\cA_r(G)
\]
for Lipschitz maps $F,G:E\to\ell_\infty^N$ (\Cref{thm:trace-invariance}). Its key ingredient is compact-perturbation invariance (\Cref{prop:lipschitz-compact-perturb}), which is also used in \Cref{sec:orientation}. All analytic operations use the fixed Euclidean coordinates; norm equivalence makes the earlier maps Lipschitz in these metrics.

Fix $N\geq n$, set $Z=\R^N$, and let $I\in\cI(n,N)$. Denote by $P_I:Z\to\R^n$ the coordinate projection onto the components indexed by $I$. At points where a Lipschitz map $F:E\to Z$ is differentiable, define the selected Jacobian
\begin{equation}\label{eq:selected-jacobian}
  J_I F(x)=\det D(P_I\circ F)(x).
\end{equation}
Its value on the exceptional null set is irrelevant.

\subsection{Compactly supported perturbations}

The argument begins with the differential Piola identity. First-order null Lagrangians depending only on the derivative are affine combinations of minors, and their weak-continuity properties are characterized in \cite[Theorem~3.4 and the discussion on p.~136]{BallCurrieOlver1981}; a geometric treatment of the Piola identity is given in \cite{KupfermanShachar2019}. We nevertheless include the short Euclidean proof, because both the row convention and the compact-perturbation form are used explicitly below.

\begin{lemma}[Differential Piola identity]\label{lem:smooth-piola}
Let $g\in C^2(E;E)$, with the convention $(Dg)_{ij}=\partial_jg_i$. If $C_i(Dg)$ denotes the $i$th row of the cofactor matrix of $Dg$, then
\[
  \operatorname{div} C_i(Dg)=0
  \qquad(1\leq i\leq n).
\]
\end{lemma}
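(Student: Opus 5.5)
We prove the Piola identity by a direct computation, using the convention $(Dg)_{ij}=\partial_jg_i$. Write $M=Dg$. The cofactor entries are $C_{ij}(M)=\partial\det M/\partial M_{ij}$, so the Laplace expansion along row $i$ reads $\det M=\sum_j M_{ij}C_{ij}(M)$. The claim to be proved is
\[
  \sum_{j=1}^n\partial_j\bigl(C_{ij}(Dg)\bigr)=0
  \qquad(1\leq i\leq n).
\]

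\textbf{Step 1: express the cofactor as a determinant.} The entry $C_{ij}(M)$ is the determinant of the matrix obtained from $M$ by replacing row $i$ with the $j$th standard basis covector $e_j^{T}$. Equivalently, up to the sign $(-1)^{i+j}$, it is the determinant of the $(n-1)\times(n-1)$ minor formed by the rows $\nabla g_k$ with $k\neq i$ and the columns $l\neq j$. Using multilinearity in rows, I will write
\[
  \sum_j\partial_jC_{ij}(Dg)=\sum_j\sum_{k\neq i}\det\bigl(M^{(i\to e_j,\;k\to\partial_j\nabla g_k)}\bigr),
\]
where the superscript indicates that row $i$ is replaced by $e_j^{T}$ and row $k$ by $\partial_j\nabla g_k=(\partial_j\partial_l g_k)_l$. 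The regularity $g\in C^2$ justifies differentiating each entry exactly once.

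\textbf{Step 2: cancel by symmetry.} Fix $k\neq i$ and sum over $j$. Expand each determinant in row $i$; since that row is $e_j^{T}$, the expansion picks out column $j$. The result is
\[
  \sum_{j,l}\partial_j\partial_lg_k\,\epsilon_{k}(j,l),
\]
where $\epsilon_k(j,l)$ is the signed cofactor obtained by deleting rows $i,k$ and columns $j,l$. This cofactor is antisymmetric under $j\leftrightarrow l$, which is the alternating property of determinants in the two removed columns. The Hessian $\partial_j\partial_lg_k$ is symmetric by Schwarz's theorem, since $g\in C^2$. A symmetric array contracted against an antisymmetric one vanishes, so each $k$-term is $0$.

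\textbf{Main obstacle.} The bookkeeping of signs in Step 2 is where care is needed, specifically verifying antisymmetry in $(j,l)$ of the doubly-deleted cofactor. I would verify it cleanly through the exterior-algebra formulation. Let $\omega=dg_1\wedge\cdots\wedge\widehat{dg_i}\wedge\cdots\wedge dg_n$ be the wedge of the differentials omitting the $i$th one. The vector field $C_i(Dg)$ corresponds, up to a sign, to $\omega$ under the Hodge star, so its divergence corresponds to $d\omega$. Since $d(dg_k)=0$ for $g\in C^2$, the Leibniz rule gives $d\omega=0$, hence $\operatorname{div}C_i(Dg)=0$.

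If the sign conventions become cumbersome, a fallback is to check the identity by density. Compute $\operatorname{div}C_i(Dg)$ for polynomial $g$, where Step 2 is a finite algebraic identity. Then pass to arbitrary $C^2$ maps by locally uniform $C^2$ approximation, using that $\operatorname{div}C_i(Dg)$ depends continuously on the second derivatives.
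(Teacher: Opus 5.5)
Your proof is correct and is essentially the paper's argument. In both, the product rule makes each term of $\sum_j\partial_jC_{ij}(Dg)$ carry one mixed partial $\partial_j\partial_lg_k$ contracted against a coefficient antisymmetric in $(j,l)$, and symmetry of second derivatives then kills it; the paper writes this with Levi-Civita symbols, while you use row-replacement determinants. The antisymmetry you flag as the main obstacle is immediate once you note that $\epsilon_k(j,l)=\det M^{(i\to e_j^{T},\,k\to e_l^{T})}$, so exchanging $j$ and $l$ just swaps rows $i$ and $k$; with that, the exterior-algebra and density fallbacks are unnecessary.
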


\begin{proof}
If $n=1$, then the cofactor matrix of $Dg$ is the constant matrix $(1)$, and the assertion is immediate. Assume that $n\geq2$ and fix $i$. With the convention that $\varepsilon_{j_1\ldots j_n}$ denotes the alternating symbol, the $j$th component of the $i$th cofactor row can be written as
\[
  C_{ij}(Dg)
  =\frac{1}{(n-1)!}
    \sum_{\substack{i_2,\ldots,i_n\\ j_2,\ldots,j_n}}
    \varepsilon_{i i_2\cdots i_n}
    \varepsilon_{j j_2\cdots j_n}
    \prod_{a=2}^n \partial_{j_a}g_{i_a}.
\]
When $\partial_j$ is applied and the result is summed over $j$, each term contains one mixed second derivative $\partial_j\partial_{j_a}g_{i_a}$. Interchanging the two column indices $j$ and $j_a$ leaves that second derivative unchanged and reverses the sign of the second alternating tensor. The terms therefore cancel in pairs. Hence
\[
  \sum_{j=1}^n\partial_j C_{ij}(Dg)=0.\qedhere
\]
\end{proof}

\begin{lemma}[Change of one output coordinate]\label{lem:one-output}
Let $g\in C^2(E;E)$, let $\phi\in C_c^1(E)$, and fix $i\in\{1,\dots,n\}$. Let $g^{(i,\phi)}$ be obtained from $g$ by replacing its $i$th component $g_i$ with $g_i+\phi$. Then
\[
  \int_E\bigl(\det Dg^{(i,\phi)}-\det Dg\bigr)\dd x=0.
\]
\end{lemma}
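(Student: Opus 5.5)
The plan is to interpolate between $g$ and $g^{(i,\phi)}$ linearly in the perturbation and to show that the derivative in the interpolation parameter vanishes. Since the determinant is linear in each row, replacing the $i$th row of $Dg$ by $Dg_i+D\phi$ gives an exact expansion along that row. Writing $C_i(Dg)$ for the $i$th cofactor row, as in \Cref{lem:smooth-piola}, we obtain
\[
  \det Dg^{(i,\phi)}-\det Dg
  =\sum_{j=1}^n\partial_j\phi\,C_{ij}(Dg).
\]
The cofactors of the $i$th row do not involve the $i$th row itself, so they are the same for $Dg$ and for $Dg^{(i,\phi)}$. This is pointwise linear algebra; no higher-order terms in $\phi$ appear.

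The next step is to integrate this identity over $E$. The integrand is compactly supported, because $\phi$ has compact support, and it is continuous: $\phi\in C^1$, and $C_{ij}(Dg)$ is a polynomial in first derivatives of $g\in C^2$. Hence $C_{ij}(Dg)$ is $C^1$. Choosing a large ball containing $\supp\phi$, I integrate by parts in each variable $x_j$. The boundary terms vanish, which yields
\[
  \int_E\sum_j\partial_j\phi\,C_{ij}(Dg)\dd x
  =-\int_E\phi\sum_j\partial_jC_{ij}(Dg)\dd x.
\]
This is the divergence theorem applied to the $C^1$ compactly supported field $\phi\,C_i(Dg)$, whose divergence is $\nabla\phi\cdot C_i(Dg)+\phi\operatorname{div}C_i(Dg)$.

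Finally, \Cref{lem:smooth-piola} gives $\operatorname{div}C_i(Dg)=0$ pointwise, so the right-hand side vanishes and the lemma follows. The case $n=1$ is covered by the same computation, since there the difference is simply $\phi'$, which has integral zero.

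I do not expect any real obstacle; the only points needing care are regularity bookkeeping. We need $C_i(Dg)\in C^1$, which holds because $g\in C^2$, and we need to confirm that the row expansion uses the row convention $(Dg)_{ij}=\partial_jg_i$, so that modifying the output coordinate $g_i$ changes exactly the $i$th row.
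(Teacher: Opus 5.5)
Your proof is correct and matches the paper's argument: expand the determinant along the modified $i$th row to get $\nabla\phi\cdot C_i(Dg)$, rewrite this as $\operatorname{div}\bigl(\phi\,C_i(Dg)\bigr)$ using \Cref{lem:smooth-piola}, and note that this compactly supported $C^1$ field has divergence with integral zero. The opening talk of interpolating in a parameter is unnecessary, since, as you observe yourself, the row expansion is exact.
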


\begin{proof}
Expansion along the modified row and \Cref{lem:smooth-piola} give
\[
  \det Dg^{(i,\phi)}-\det Dg
  =\nabla\phi\cdot C_i(Dg)
  =\operatorname{div}\bigl(\phi C_i(Dg)\bigr).
\]
The vector field on the right is continuously differentiable and compactly supported, so its divergence has integral zero.
\end{proof}

\begin{proposition}[Smooth compact perturbations]\label{prop:smooth-compact-perturb}
Let $G,U:E\to Z$ be smooth, and suppose that $U$ has compact support. Then, for every $I\in\cI(n,N)$, the difference $J_I(G+U)-J_I(G)$ is compactly supported and integrable, and
\[
  \int_E\bigl(J_I(G+U)-J_I(G)\bigr)\dd x=0.
\]
\end{proposition}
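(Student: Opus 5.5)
Set $g=P_I\circ G$ and $u=P_I\circ U$; both are smooth maps $E\to E$, $u$ has compact support, and $J_I(G+U)-J_I(G)=\det D(g+u)-\det Dg$. Hence only the $n$ selected coordinates matter, and the statement reduces to a claim about maps $E\to E$.

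\emph{Support and integrability.} Let $K=\supp u$, which is compact. Outside $K$ we have $D(g+u)=Dg$, so the difference vanishes there and is supported in $K$. It is continuous, hence bounded on $K$, and therefore integrable.

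\emph{Telescoping.} We replace the components of $g$ by those of $g+u$ one at a time. Define $g^{(0)}=g$, and for $1\le k\le n$ let $g^{(k)}$ be $g$ with components $g_1,\dots,g_k$ replaced by $g_1+u_1,\dots,g_k+u_k$. Then $g^{(n)}=g+u$, and
\[
  \det D(g+u)-\det Dg=\sum_{k=1}^n\bigl(\det Dg^{(k)}-\det Dg^{(k-1)}\bigr).
\]
Each map $g^{(k-1)}$ is smooth, in particular $C^2$. The map $g^{(k)}$ is obtained from $g^{(k-1)}$ by replacing its $k$th component with that component plus $\phi=u_k\in C_c^\infty(E)\subseteq C_c^1(E)$. \Cref{lem:one-output} therefore applies to each term and shows that it integrates to zero over $E$. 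Each term is also compactly supported (in $K$) and continuous, so every integral is finite and the sum may be integrated term by term. Summing gives the claim.

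The only care needed is that $g^{(k-1)}$ itself need not be compactly supported; \Cref{lem:one-output} does not require this, since only $\phi$ must have compact support. With that observed, the argument is a direct telescoping, and no step is a real obstacle.
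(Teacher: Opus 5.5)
Your proof is correct and follows essentially the same route as the paper. Like the paper, you reduce to $P_I\circ G$ and $P_I\circ U$, change one selected output coordinate at a time, apply \Cref{lem:one-output} to each step (it needs only the perturbation $u_k$ to be compactly supported), and telescope, with each difference supported in the compact set $\supp(P_I\circ U)$.
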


\begin{proof}
Write $P_I\circ U=(u_1,\ldots,u_n)$ and set
\[
  H_0=P_I\circ G,
  \qquad
  H_k=H_{k-1}+u_k e_k
  \quad(1\leq k\leq n).
\]
Thus $H_n=P_I\circ(G+U)$. Each $u_k$ is smooth and compactly supported. Applying \Cref{lem:one-output} to the pair $(H_{k-1},u_k)$ gives
\[
  \int_E(\det DH_k-\det DH_{k-1})\dd x=0.
\]
The difference is supported in $\supp u_k$, and is therefore integrable. Summing over $k$ telescopes from $H_0$ to $H_n$ and proves the claim.
\end{proof}

We now pass from smooth to Lipschitz perturbations. Write $\matnorm{P}$ for the Frobenius norm of a matrix $P$. Since the determinant is a homogeneous polynomial of degree $n$, there exists a constant $C_n>0$ such that
\begin{equation}\label{eq:det-difference}
  \abs{\det P-\det Q}
  \leq C_n\matnorm{P-Q}
  \bigl(\matnorm{P}^{n-1}+\matnorm{Q}^{n-1}\bigr).
\end{equation}
Indeed, replace the columns of $Q$ by those of $P$ one at a time, expand the resulting telescoping sum by multilinearity, and bound each mixed determinant by a dimension-dependent product of Euclidean column norms.

\begin{lemma}[Strong $L^n$ continuity of the determinant]\label{lem:strong-ln-det}
Let $K\subset E$ be measurable with finite measure, with $n\geq1$. Let $P_k,P:K\to\R^{n\times n}$ be matrix fields. If $P_k\to P$ strongly in $L^n(K;\R^{n\times n})$, then
\[
  \det P_k\longrightarrow\det P
  \quad\text{strongly in }L^1(K).
\]
The same conclusion holds for any fixed $n\times n$ submatrix of rectangular matrix fields converging strongly in $L^n(K)$.
\end{lemma}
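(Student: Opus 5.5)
The plan is to combine the pointwise estimate \eqref{eq:det-difference} with Hölder's inequality, integrating over $K$. Pointwise on $K$ we have
\[
  \abs{\det P_k-\det P}
  \leq C_n\matnorm{P_k-P}\bigl(\matnorm{P_k}^{n-1}+\matnorm{P}^{n-1}\bigr).
\]
We then apply Hölder's inequality with exponents $n$ and $n/(n-1)$ (for $n\geq2$). This gives
\[
  \norm{\det P_k-\det P}_{L^1(K)}
  \leq C_n\norm{P_k-P}_{L^n}\Bigl(\norm{\matnorm{P_k}^{n-1}}_{L^{n/(n-1)}}+\norm{\matnorm{P}^{n-1}}_{L^{n/(n-1)}}\Bigr),
\]
and the bracket equals $\norm{P_k}_{L^n}^{n-1}+\norm{P}_{L^n}^{n-1}$. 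Since $P_k\to P$ in $L^n$, the norms $\norm{P_k}_{L^n}$ are bounded, say by $\norm{P}_{L^n}+1$ for large $k$. The first factor tends to zero, so the right side does as well.

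The case $n=1$ is handled separately. There the determinant is the identity map and \eqref{eq:det-difference} holds with the second factor constant, so the claim is just $L^1$ convergence. This follows from $L^1$ convergence being the hypothesis itself.

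For the rectangular statement, fix the $n\times n$ submatrix selected by a set $I$ of rows. The map $M\mapsto M_I$ is a coordinate projection, which does not increase the Frobenius norm pointwise. Hence $(P_k)_I\to P_I$ in $L^n(K)$, and the square case applies.

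The only point needing care is integrability. We must check that $\matnorm{P}^{n-1}$ belongs to $L^{n/(n-1)}$, which holds exactly because $P\in L^n$. Finite measure of $K$ is not even essential here; it merely ensures that the $L^n$ hypothesis is meaningful for the bounded derivative fields to which the lemma will be applied. Measurability of $\det P_k$ is immediate, since it is a polynomial in the measurable entries. I expect no genuine obstacle; the main thing to get right is the exponent bookkeeping in Hölder's inequality.
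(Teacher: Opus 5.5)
Your proposal is correct and matches the paper's proof: you combine the pointwise bound \eqref{eq:det-difference} with H\"older's inequality at exponents $n$ and $n/(n-1)$, use boundedness of $\norm{P_k}_{L^n}$, and treat $n=1$ as the linear case. Your handling of the rectangular case via the Frobenius-norm-nonincreasing coordinate projection is sound, and so is your remark that finite measure of $K$ is not actually needed.
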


\begin{proof}
For $n=1$, the determinant is linear. Suppose that $n\geq2$. By \eqref{eq:det-difference} and H\"older's inequality,
\begin{align*}
  \norm{\det P_k-\det P}_{L^1(K)}
  &\leq C\norm{P_k-P}_{L^n(K)}\\
  &\quad\cdot
  \left(\norm{P_k}_{L^n(K)}^{n-1}
       +\norm{P}_{L^n(K)}^{n-1}\right).
\end{align*}
The second factor is bounded because $P_k\to P$ in $L^n(K)$, whereas the first tends to zero.
\end{proof}

\begin{proposition}[Lipschitz compact perturbations]\label{prop:lipschitz-compact-perturb}
Let $G,U:E\to Z$ be Lipschitz, and suppose that $U$ has compact support. Then, for every $I\in\cI(n,N)$, the difference $J_I(G+U)-J_I(G)$ is integrable, and
\[
  \int_E\bigl(J_I(G+U)-J_I(G)\bigr)\dd x=0.
\]
\end{proposition}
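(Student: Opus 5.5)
The plan is to deduce the statement from \Cref{prop:smooth-compact-perturb} by mollification, using \Cref{lem:strong-ln-det} to pass to the limit.

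\emph{Localization.} Fix a closed Euclidean ball $K$ containing a neighborhood of $\supp U$. Outside $\supp U$ we have $G+U=G$, so $D(G+U)=DG$ almost everywhere there, and $J_I(G+U)-J_I(G)$ vanishes a.e.\ off $\supp U$. Both Jacobians are essentially bounded, because the derivatives of Lipschitz maps are. Hence the difference is bounded, supported in $\supp U$, and integrable. It therefore suffices to show
\[
  \int_K J_I(G+U)\dd x=\int_K J_I(G)\dd x.
\]

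\emph{Mollification.} Let $\rho_\varepsilon$ be a standard mollifier and set $G_\varepsilon=G*\rho_\varepsilon$ and $U_\varepsilon=U*\rho_\varepsilon$, componentwise. These maps are smooth, and for small $\varepsilon$ the support of $U_\varepsilon$ is compact and contained in the interior of $K$. By \Cref{prop:smooth-compact-perturb},
\[
  \int_E\bigl(J_I(G_\varepsilon+U_\varepsilon)-J_I(G_\varepsilon)\bigr)\dd x=0,
\]
and the integrand vanishes outside $K$. So the integrals of the two Jacobians over $K$ agree for each small $\varepsilon$.

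\emph{Passage to the limit.} We have $DG_\varepsilon=(DG)*\rho_\varepsilon$, where $DG$ is the weak derivative, which coincides a.e.\ with the classical one. Since $DG\in L^\infty_{\mathrm{loc}}\subset L^n_{\mathrm{loc}}$, this converges to $DG$ strongly in $L^n(K)$. The same holds for $U$ and for $G+U$, because convolution is linear. Apply \Cref{lem:strong-ln-det} to the $I$-row submatrices. Then $J_I(G_\varepsilon)\to J_I(G)$ and $J_I(G_\varepsilon+U_\varepsilon)\to J_I(G+U)$ in $L^1(K)$, so the integrals over $K$ converge and the identity follows.

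\emph{Main obstacle.} The only delicate point is identifying the derivative of the mollified map with the mollified weak derivative, and matching the weak derivative a.e.\ with the Rademacher derivative used in $J_I$. This is the standard fact that Lipschitz functions lie in $W^{1,\infty}_{\mathrm{loc}}$, with weak gradient equal to the a.e.\ classical gradient. I would cite \cite{EvansGariepy2015} for it, consistent with the convention fixed in \Cref{def:plucker-average}.
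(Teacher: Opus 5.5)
Your proposal is correct and follows the paper's proof essentially step for step. You mollify $G$ and $U$, apply \Cref{prop:smooth-compact-perturb} on a compact neighborhood $K$ of $\supp U$, and pass to the limit in $L^1(K)$ via \Cref{lem:strong-ln-det}, using $L^n(K)$ convergence of the mollified derivatives. The only cosmetic difference is that you split the difference into two separate integrals over the bounded set $K$, whereas the paper takes the limit of the difference directly. Both work, since the Jacobians of Lipschitz maps are bounded on $K$.
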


\begin{proof}
Choose a compact neighborhood $K$ of $\supp U$. For a standard mollifier $\rho_\eta$, set $G_\eta=G*\rho_\eta$ and $U_\eta=U*\rho_\eta$. For all sufficiently small $\eta>0$, one has $\supp U_\eta\subset K$, and \Cref{prop:smooth-compact-perturb} gives
\[
  \int_K\bigl(J_I(G_\eta+U_\eta)-J_I(G_\eta)\bigr)\dd x=0.
\]
Since Lipschitz maps belong locally to $W^{1,\infty}$, mollifier approximation \cite[Section~4.2.1]{EvansGariepy2015} yields
\[
  DG_\eta\to DG,\qquad DU_\eta\to DU
  \quad\text{strongly in }L^n(K).
\]
By \Cref{lem:strong-ln-det},
\[
  J_I(G_\eta+U_\eta)-J_I(G_\eta)
  \longrightarrow J_I(G+U)-J_I(G)
  \quad\text{in }L^1(K).
\]
All these differences vanish almost everywhere outside $K$. Passing to the limit proves the assertion; no integrability of the individual whole-space determinants is required.
\end{proof}

\subsection{Patching across a norm sphere}

Let $r$ be a norm on $E$. To apply the compact-perturbation result, we convert equality on $S_r$ into a globally Lipschitz perturbation supported in $B_r$.

\begin{lemma}[Zero extension]\label{lem:zero-extension}
Let $u:E\to Z$ be Lipschitz and satisfy $u=0$ on $S_r$. Define
\[
  u_0(x)=
  \begin{cases}
    u(x),&x\in B_r,\\
    0,&x\notin B_r.
  \end{cases}
\]
Then $u_0$ is globally Lipschitz and has compact support.
\end{lemma}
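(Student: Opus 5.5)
Compact support is immediate: $u_0$ vanishes outside $B_r$, and $B_r$ is compact because $r$ is equivalent to the Euclidean norm by \eqref{eq:model-comparison}. The substantive claim is the global Lipschitz bound. Let $L$ be the Lipschitz constant of $u$ with respect to $r$ on $E$ and the chosen norm on $Z$; norm equivalence lets us work with $r$ throughout and convert to Euclidean constants at the end. We will show that $u_0$ is $L$-Lipschitz with respect to $r$, by examining the possible positions of two points $x,y\in E$.

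If $x,y\in B_r$, then $u_0(x)-u_0(y)=u(x)-u(y)$, whose norm is at most $L\,r(x-y)$. If $x,y\notin B_r$, the difference is $0$. The remaining case is $x\in B_r$ and $y\notin B_r$. Here we choose a point $z\in S_r$ on the segment $[x,y]$. It exists by continuity of $t\mapsto r(x+t(y-x))$, which is $\le1$ at $t=0$ and $>1$ at $t=1$, together with the intermediate value theorem. Since $u(z)=0$,
\[
  \norm{u_0(x)-u_0(y)}=\norm{u(x)-u(z)}\leq L\,r(x-z)\leq L\,r(x-y),
\]
because $x-z$ is a nonnegative multiple $t(x-y)$ with $0\leq t\leq1$.

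The only point needing any care is the mixed case, and the segment argument handles it. Convexity of $B_r$ is not needed; we use only that $z$ lies between $x$ and $y$, so $r(x-z)\leq r(x-y)$. Combining the three cases shows that $u_0$ is $L$-Lipschitz for $r$, and hence Lipschitz for the Euclidean metric with constant $L\Lambda_r/\lambda_r$ up to the fixed comparison constants between $\norm{\cdot}_\infty$ and $\norm{\cdot}_2$.
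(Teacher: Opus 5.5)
Your proof is correct and is essentially the paper's argument. Both reduce to the mixed case $x\in B_r$, $y\notin B_r$ and pick $z\in S_r$ on the segment to get $\norm{u(x)-u(z)}\leq\Lip(u)\,\norm{x-z}\leq\Lip(u)\,\norm{x-y}$; you are right that the intermediate value theorem alone produces $z$, without convexity of $B_r$.

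The paper runs this estimate directly in the Euclidean norm, which avoids your detour through $r$. That detour also garbles your final constant: it should be $L\Lambda_r$ up to the $\infty$-to-$2$ comparison, and $1/\lambda_r$ enters only when you convert the Euclidean hypothesis on $u$ into an $r$-Lipschitz bound. This slip is harmless, because being Lipschitz does not depend on the choice of norm.
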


\begin{proof}
Only pairs of points on opposite sides of $S_r$ require attention. Let $x\in B_r$ and $y\notin B_r$. By continuity of $r$ along the segment and convexity of $B_r$, the line segment from $x$ to $y$ meets $S_r$ at a point $z$. Since $u(z)=0$ and $z$ lies between $x$ and $y$,
\[
  \norm{u_0(x)-u_0(y)}_2
  =\norm{u(x)-u(z)}_2
  \leq\Lip(u)\norm{x-z}_2
  \leq\Lip(u)\norm{x-y}_2.
\]
The remaining cases follow directly from the Lipschitz property of $u$. Finally, $\supp u_0\subseteq B_r$, and $B_r$ is compact.
\end{proof}

\begin{lemma}[The unit sphere is null]\label{lem:sphere-null}
If $n\geq1$, then $\cL^n(S_r)=0$.
\end{lemma}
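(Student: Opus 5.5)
The plan is to exploit homogeneity of the norm together with the scaling behaviour of Lebesgue measure. For $0<t<1$ we have $tB_r\subseteq\set{x:r(x)<1}$, and so the sets $tB_r$ and $S_r$ are disjoint. Since $S_r\subseteq B_r$, this gives
\[
  S_r\subseteq B_r\setminus tB_r ,
\]
and therefore
\[
  \cL^n(S_r)\leq\cL^n(B_r)-\cL^n(tB_r)=(1-t^n)v_r ,
\]
using that dilation by $t$ multiplies Lebesgue measure by $t^n$.

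Two facts are needed to finish. First, $v_r<\infty$. This holds because $B_r$ is compact, being closed and bounded by \eqref{eq:model-comparison}. Second, $S_r$ is Lebesgue measurable, which is true because it is closed, as the preimage of $\{1\}$ under the continuous function $r$. Letting $t\uparrow1$ in the displayed estimate then gives $\cL^n(S_r)=0$.

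An alternative argument runs as follows. The dilates $tS_r$ for $t\in(1/2,1)$ are pairwise disjoint and all lie in $B_r$, and each has measure $t^n\cL^n(S_r)\geq 2^{-n}\cL^n(S_r)$. There are uncountably many of them inside a set of finite measure, which forces $\cL^n(S_r)=0$. I will use the first argument, since it is shorter. No step is a real obstacle; the only point that needs care is confirming finiteness of $v_r$ and measurability of $S_r$, both of which follow from norm equivalence and continuity of $r$.
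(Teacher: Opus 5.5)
Your proposal is correct and follows the paper's own argument: the inclusion $S_r\subset B_r\setminus tB_r$ together with the scaling $\cL^n(tB_r)=t^nv_r$ gives $\cL^n(S_r)\leq(1-t^n)v_r\to0$ as $t\uparrow1$. Your added checks that $v_r<\infty$ and that $S_r$ is measurable are details the paper leaves implicit.
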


\begin{proof}
For $0<t<1$, the inclusion $S_r\subset B_r\setminus tB_r$ gives
\[
  0\leq\cL^n(S_r)\leq(1-t^n)v_r\longrightarrow0
  \qquad(t\uparrow1).\qedhere
\]
\end{proof}

\begin{theorem}[Boundary-trace invariance]\label{thm:trace-invariance}
Let $r$ be a norm on $E=\R^n$ with $n\geq1$, and let $F,G:E\to\ell_\infty^N$ be Lipschitz maps satisfying
\[
  F=G\quad\text{on }S_r.
\]
Then, for every $I\in\cI(n,N)$,
\begin{equation}\label{eq:minor-trace-invariance}
  \int_{B_r}J_I F(x)\dd x
  =\int_{B_r}J_I G(x)\dd x.
\end{equation}
Equivalently,
\[
  \cA_r(F)=\cA_r(G).
\]
\end{theorem}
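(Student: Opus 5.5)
The plan is to reduce the statement to \Cref{prop:lipschitz-compact-perturb} by turning the difference $F-G$ into a compactly supported Lipschitz perturbation. Set $u=F-G:E\to Z$. It is Lipschitz and vanishes on $S_r$, so \Cref{lem:zero-extension} makes its zero extension $u_0$ globally Lipschitz with support in $B_r$. Define
\[
  H=G+u_0,
\]
which agrees with $F$ on $B_r$ and with $G$ outside $B_r$. By \Cref{prop:lipschitz-compact-perturb}, applied with $U=u_0$, the difference $J_IH-J_IG$ is integrable and
\[
  \int_E\bigl(J_IH-J_IG\bigr)\dd x=0 .
\]

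Next I will localize this integral to $B_r$. On the open set $E\setminus B_r$ we have $H=G$, so both maps are differentiable at the same points there with equal derivatives, and $J_IH=J_IG$ almost everywhere outside $B_r$. On the interior of $B_r$ we have $H=F$, and the same reasoning gives $J_IH=J_IF$ almost everywhere there. The only remaining set is $S_r$, which is Lebesgue-null by \Cref{lem:sphere-null}.

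Combining these facts gives
\[
  0=\int_{B_r}\bigl(J_IF-J_IG\bigr)\dd x .
\]
Both integrands are bounded on the compact set $B_r$, since $F$ and $G$ are Lipschitz, so the two integrals in \eqref{eq:minor-trace-invariance} are separately finite and the identity follows. Taking all $I\in\cI(n,N)$ yields $\cA_r(F)=\cA_r(G)$ by \eqref{eq:plucker-average}.

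The main point needing care is the localization step. It requires that a.e.\ derivatives agree where the maps coincide on open sets, which is immediate for classical derivatives at differentiability points. It also requires that the measurable-representative convention in \Cref{def:plucker-average} does not interfere. This is harmless because the weak derivative of a Lipschitz map is local on open sets. The boundary issue is fully absorbed by the nullity of $S_r$, so I do not expect a genuine obstacle beyond \Cref{prop:lipschitz-compact-perturb} itself.
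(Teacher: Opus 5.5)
Your proposal is correct and follows the paper's proof essentially step for step. Both arguments zero-extend $u=F-G$ via \Cref{lem:zero-extension}, set $H=G+u_0$, apply \Cref{prop:lipschitz-compact-perturb}, and localize to $B_r$ using $H=G$ off $B_r$, $H=F$ on its interior, and the nullity of $S_r$.
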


\begin{proof}
Set $u=F-G$, and let $u_0$ be the zero extension of $u|_{B_r}$ provided by \Cref{lem:zero-extension}. Define $H=G+u_0$. Then $H=F$ on $B_r$, $H=G$ on $E\setminus B_r$, and $H-G$ is a compactly supported Lipschitz map. By \Cref{prop:lipschitz-compact-perturb},
\[
  \int_E\bigl(J_I H-J_I G\bigr)\dd x=0.
\]
The integrand vanishes almost everywhere outside $B_r$ and equals $J_I F-J_I G$ on the interior of $B_r$. Since $S_r=\partial B_r$ is null by \Cref{lem:sphere-null}, this proves \eqref{eq:minor-trace-invariance}. Applying the identity to every maximal minor gives equality of the Pl\"ucker averages.
\end{proof}

\Cref{thm:trace-invariance} is the only boundary-trace result used later. It allows the Pl\"ucker average to be computed from any Lipschitz extension of the same sphere data. The next section evaluates the average for the radial extension and combines the two identities to prove metric invariance of the contraction bodies.
\section{Orientation and metric invariance}\label{sec:orientation}

The constancy of the Jacobian sign is classical for bi-Lipschitz homeomorphisms of connected domains; see \cite[Introduction and Section~2]{HenclMaly2010} for its relation to topological orientation. We give an analytic proof using \Cref{prop:lipschitz-compact-perturb}, so that the signed integral formula follows from the same identity as the boundary-trace theorem.

Let
\[
  f=\widehat\delta:E\longrightarrow E
\]
be the radial extension of the sphere isometry $\delta:S_p\to S_q$, and let $g=\widehat{\delta^{-1}}$ be its inverse. Our goal is the identity
\[
  \int_{B_p}\det Df(x)\dd x=s_\delta v_q
  \qquad\text{for some }s_\delta\in\{-1,1\},
\]
which also determines the averaged minors of $A\circ f$ (\Cref{thm:orientation-sign}). We show that $Df$ is nondegenerate almost everywhere and that its sign, transported to the target, has zero weak gradient. Set
\[
  U=\operatorname{int}B_p,
  \qquad
  V=\operatorname{int}B_q.
\]
By \Cref{prop:radial-bilip}, the restrictions $f|_U:U\to V$ and $g|_V:V\to U$ are inverse bi-Lipschitz homeomorphisms. Combining the lower Lipschitz estimate for $f$ with equivalence of the Euclidean and norm metrics, we obtain a constant $a>0$ such that
\begin{equation}\label{eq:anti-lipschitz}
  a\norm{x-x'}_2\leq\norm{f(x)-f(x')}_2
  \qquad(x,x'\in E).
\end{equation}
Choose a Borel measurable representative of $Df$ that agrees almost everywhere with the classical derivative, assigning the value $0$ on a Borel null set containing the exceptional points, and write
\[
  J_f(x)=\det Df(x).
\]
The function $J_f$ is measurable and essentially bounded on bounded sets.

\subsection{Nondegeneracy and signed change of variables}

\begin{lemma}[Almost-everywhere nondegeneracy]\label{lem:derivative-nondegenerate}
At almost every point $x\in U$, the derivative $Df(x)$ exists and is invertible. In particular, $J_f(x)\neq0$ for almost every $x\in U$.
\end{lemma}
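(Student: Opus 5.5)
We show that at almost every point of $U$ the derivative $Df(x)$ is injective, using only the lower Lipschitz bound \eqref{eq:anti-lipschitz}; injectivity of a square matrix then gives invertibility. By Rademacher's theorem, $f$ is differentiable at almost every $x\in E$, hence at almost every point of $U$. Fix such a point $x\in U$ and a vector $h\in E$.

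For small $t>0$, differentiability gives
\[
  f(x+th)-f(x)=t\,Df(x)h+o(t).
\]
Comparing with \eqref{eq:anti-lipschitz} applied to the pair $(x+th,x)$,
\[
  a\,t\norm{h}_2\leq t\norm{Df(x)h}_2+o(t).
\]
Dividing by $t$ and letting $t\downarrow0$ yields $\norm{Df(x)h}_2\geq a\norm{h}_2$ for every $h\in E$. Thus $Df(x)$ is an injective linear map $E\to E$, and therefore invertible. In particular $J_f(x)=\det Df(x)\neq0$ at every such point.

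It remains to reconcile this with the Borel representative chosen in the definition of $J_f$. That representative agrees with the classical derivative off a null set, so $J_f\neq0$ almost everywhere in $U$. The estimate \eqref{eq:anti-lipschitz} itself was recorded before the lemma: it comes from the $3$-Lipschitz bound for $g=\widehat{\delta^{-1}}$ in \Cref{prop:radial-bilip}, together with equivalence of norms, via $\norm{x-x'}=\norm{g(f(x))-g(f(x'))}\leq 3\norm{f(x)-f(x')}$.

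No step is a genuine obstacle. The only point needing care is that the lower bound \eqref{eq:anti-lipschitz} must hold for all pairs, including pairs near $x$. This is fine because it is a global inequality on $E$, so no restriction to $U$ is needed, apart from $U$ being open so that $x+th$ is defined for all $t$.
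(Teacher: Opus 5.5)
Your proof is correct and matches the paper's argument: Rademacher's theorem, then the global lower bound \eqref{eq:anti-lipschitz} applied to $x$ and $x+th$, division by $t$, and the fact that an injective square matrix is invertible. One small point: you do not need $U$ to be open for $x+th$ to make sense, since $f$ is defined on all of $E$.
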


\begin{proof}
Rademacher's theorem gives differentiability almost everywhere. Let $x$ be a differentiability point outside the chosen exceptional null set. Applying \eqref{eq:anti-lipschitz} to $x$ and $x+tv$, dividing by $\abs t$, and letting $t\to0$, we obtain
\[
  a\norm{v}_2\leq\norm{Df(x)v}_2
  \qquad(v\in E).
\]
Thus $Df(x)$ is injective. Its domain and codomain both have dimension $n$, so it is invertible.
\end{proof}

Define the source sign by
\[
  \varepsilon(x)=
  \begin{cases}
    1,&J_f(x)\geq0,\\
    -1,&J_f(x)<0.
  \end{cases}
\]
Then $\varepsilon$ is Borel measurable, takes values in $\{-1,1\}$, and satisfies
\[
  J_f(x)=\varepsilon(x)\abs{J_f(x)}
\]
for every $x$. By \Cref{lem:derivative-nondegenerate}, $J_f$ is nonzero for almost every $x\in U$. Transport it to the target by setting
\begin{equation}\label{eq:target-sign}
  \sigma(y)=\varepsilon(g(y))
  \qquad(y\in V).
\end{equation}
The inverse map $g$ is continuous, so $\sigma$ is Borel measurable and bounded by $1$ in absolute value. The values assigned on exceptional null sets will not affect any integral.

Both $f$ and $g$ send Lebesgue-null sets to null sets, as do all Lipschitz maps between subsets of $\R^n$ \cite[Sections~2.2 and~2.4.1]{EvansGariepy2015}. In particular, composition with $f$ preserves measurability of Lebesgue representatives and does not depend, modulo null sets, on the chosen representative.

\begin{lemma}[Signed change of variables]\label{lem:signed-transfer}
Let $\varphi:V\to\R$ be a measurable representative of an element of $L^1(V)$. Then both sides below are absolutely integrable and
\begin{equation}\label{eq:signed-transfer}
  \int_U\varphi(f(x))J_f(x)\dd x
  =\int_V\varphi(y)\sigma(y)\dd y.
\end{equation}
The value of either side depends only on the $L^1$ class of $\varphi$.
\end{lemma}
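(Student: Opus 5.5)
The identity will follow from the unsigned area formula for the injective Lipschitz map $f|_U:U\to V$, with the sign $\varepsilon$ carried along through $\sigma$. Recall the area formula for a Lipschitz map $f:\R^n\to\R^n$ and a nonnegative measurable $\psi$ \cite[Section~3.3]{EvansGariepy2015}:
\[
  \int_U\psi(x)\abs{J_f(x)}\dd x=\int_{\R^n}\sum_{x\in U\cap f^{-1}(y)}\psi(x)\dd y.
\]
Since $f|_U$ is a bijection onto $V$, the right-hand side reduces to $\int_V\psi(g(y))\dd y$. By linearity this extends to every $\psi$ for which either side is absolutely integrable.

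The plan is to apply this with $\psi(x)=\varphi(f(x))\varepsilon(x)$. This $\psi$ is measurable, because $f$ is continuous and $\varepsilon$ is Borel, and $\varphi\circ f$ is well defined modulo null sets because $g$ maps null sets to null sets. First I would treat $\abs{\psi}=\abs{\varphi\circ f}$, which gives
\[
  \int_U\abs{\varphi(f(x))}\abs{J_f(x)}\dd x=\int_V\abs{\varphi(y)}\dd y<\infty.
\]
This shows that the left side of \eqref{eq:signed-transfer} is absolutely integrable. The right side is absolutely integrable because $\abs{\sigma}\le1$.

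Next I would split $\psi$ into positive and negative parts and apply the area formula to each. Using $\varepsilon\abs{J_f}=J_f$ on the source side and $\psi(g(y))=\varphi(y)\varepsilon(g(y))=\varphi(y)\sigma(y)$ on the target side, this yields \eqref{eq:signed-transfer}. Exceptional sets need one check: the representative of $Df$ was set to $0$ on a null set, and there the integrand $J_f$ vanishes. Its image under $f$ is null, so changing $\sigma$ on that image does not affect the right side.

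Finally, independence from the representative of $\varphi$ is handled the same way. If $\varphi=\varphi'$ off a null set $Z\subset V$, then $\varphi\circ f=\varphi'\circ f$ off $g(Z)$, which is null because $g$ is Lipschitz. Hence both sides are unchanged.

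The main obstacle is justifying the area formula with multiplicity one. The cited reference states it for Lipschitz maps on $\R^n$, and here we only need injectivity on $U$, which \Cref{prop:radial-bilip} provides. If the citation were not considered sufficient, I would first prove the formula for indicator functions of balls, using the Lebesgue differentiation argument on the bi-Lipschitz homeomorphism, and then pass to general $\psi$ by monotone class.
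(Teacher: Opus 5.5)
Your proposal is correct and follows essentially the paper's argument. The area formula for the injective Lipschitz map $f|_U$, applied to $\abs{\varphi\circ f}$, gives absolute integrability; applied to $(\varphi\sigma)\circ f=\varepsilon\,(\varphi\circ f)$ together with $J_f=\varepsilon\abs{J_f}$, it gives the signed identity. One small precision: when $\varphi$ is merely Lebesgue measurable, $\varphi\circ f$ is measurable because $g$ maps null sets to null sets, as you also note, not because $f$ is continuous.
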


\begin{proof}
The weighted area formula for the injective Lipschitz map $f|_U:U\to V$ gives
\begin{equation}\label{eq:absolute-area-transfer}
  \int_U\abs{\varphi(f(x))}\,\abs{J_f(x)}\dd x
  =\int_V\abs{\varphi(y)}\dd y<\infty;
\end{equation}
see \cite[Theorem~3.9]{EvansGariepy2015} or \cite[3.2.3(1)]{Federer1969}. Since $\sigma\circ f=\varepsilon$ and $J_f=\varepsilon\abs{J_f}$, applying the same formula to $\varphi\sigma\in L^1(V)$ yields
\[
  \int_U\varphi(f(x))J_f(x)\dd x
  =\int_U(\varphi\sigma)(f(x))\abs{J_f(x)}\dd x
  =\int_V\varphi(y)\sigma(y)\dd y.
\]
Equation \eqref{eq:absolute-area-transfer} also proves independence of the representative.
\end{proof}

\subsection{Weak Piola identity and metric invariance}

\begin{lemma}[Weak Piola identity]\label{lem:weak-piola}
For every vector field $W\in C_c^1(V;\R^n)$,
\begin{equation}\label{eq:weak-piola}
  \int_U (\operatorname{div}W)(f(x))J_f(x)\dd x=0.
\end{equation}
\end{lemma}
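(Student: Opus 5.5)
The plan is to recognize the integrand as a Jacobian of a compactly supported perturbation, and then apply \Cref{prop:lipschitz-compact-perturb}. Extend $W$ by zero to a $C^1_c$ field on $E$ with $\supp W\subset V$. Then consider the Lipschitz map $W\circ f$ on $U$. Since $\supp W$ is a compact subset of $V$ and $g$ is continuous, the set $g(\supp W)$ is a compact subset of $U$. Hence $W\circ f$, extended by zero outside $U$, is a compactly supported Lipschitz map $E\to E$.

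The key pointwise identity is the chain rule combined with the cofactor expansion. At almost every $x$ where $f$ is differentiable, $D(W\circ f)(x)=DW(f(x))\,Df(x)$. I would use the first-order expansion
\[
  \frac{d}{dt}\Big|_{t=0}\det\bigl(Df+t\,DW(f)Df\bigr)
  =\operatorname{tr}\bigl(DW(f)\bigr)\det Df
  =(\operatorname{div}W)(f)\,J_f .
\]
So the integrand in \eqref{eq:weak-piola} is the linear-in-$t$ part of $\det D(f+tW\circ f)$. By \Cref{prop:lipschitz-compact-perturb}, applied with $G=f$, $U=tW\circ f$, $N=n$ and $I=\{1,\ldots,n\}$, we get for every real $t$
\[
  \int_E\bigl(\det D(f+tW\circ f)-\det Df\bigr)\dd x=0 .
\]
The difference is a polynomial in $t$ whose coefficients are integrable functions supported in the compact set $g(\supp W)$. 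These coefficients are products of bounded derivatives, hence bounded. Vanishing for all $t$ forces each coefficient's integral to vanish. The coefficient of $t$ is exactly $(\operatorname{div}W)(f)J_f$. It is supported in $U$, so integrating over $E$ is the same as integrating over $U$.

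Two details need care. First, the chain rule for $W\circ f$ holds at every differentiability point of $f$, because $W$ is $C^1$. Those points have full measure, and the representative conventions only alter a null set. Second, the weak derivative used in $J_I$ must agree a.e.\ with this classical derivative, which holds for Lipschitz maps. I expect the only real obstacle to be confirming that the perturbation is compactly supported and Lipschitz on all of $E$. This is settled by $g(\supp W)\Subset U$ together with the global Lipschitz bound on $f$ from \Cref{prop:radial-bilip}. The extension by zero outside $U$ is then harmless: $W\circ f$ already vanishes on a neighborhood of $E\setminus U$, since $f$ maps $E\setminus U$ into $E\setminus V$.
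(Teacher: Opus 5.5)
Your proof is correct and is essentially the paper's argument. Both apply \Cref{prop:lipschitz-compact-perturb} to $f$ perturbed by a compactly supported Lipschitz composite of $W$ with $f$, supported in $g(\supp W)\Subset U$, and use the chain-rule factorization $D(f+u)=(I+\cdots)Df$. The only difference is in isolating the linear term. The paper perturbs one coordinate at a time by $u_i=W_i(f)e_i$, so the rank-one identity $\det\bigl(I+e_iDW_i(f(x))\bigr)=1+\partial_iW_i(f(x))$ gives $\partial_iW_i(f)J_f$ exactly, and then sums over $i$. You instead perturb by $tW\circ f$ and take the linear coefficient of a polynomial in $t$. Both routes are valid.
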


\begin{proof}
Extend $W$ by zero to a field in $C_c^1(E;E)$. Write $e_i$ for the $i$th standard basis vector and $DW_i$ for the row derivative of the scalar component $W_i$. For each $i$, the map
\[
  u_i(x)=W_i(f(x))e_i
\]
is Lipschitz, with $\supp u_i\subseteq g(\supp W)\Subset U$. At almost every differentiability point of $f$, the chain rule gives
\[
  D(f+u_i)(x)
  =\bigl(I+e_iDW_i(f(x))\bigr)Df(x).
\]
Only row $i$ of $e_iDW_i$ can be nonzero, so
\begin{align*}
  \det\bigl(I+e_iDW_i(f(x))\bigr)
    &=1+\partial_iW_i(f(x)),\\
  \det D(f+u_i)(x)-\det Df(x)
    &=\partial_iW_i(f(x))J_f(x).
\end{align*}
By \Cref{prop:lipschitz-compact-perturb}, the determinant difference has integral zero. It vanishes almost everywhere outside $U$, hence
\[
  \int_U\partial_iW_i(f(x))J_f(x)\dd x=0.
\]
Summing over $i$ proves \eqref{eq:weak-piola}.
\end{proof}

Applying the signed change-of-variables formula to the weak Piola identity gives
\begin{equation}\label{eq:target-sign-weak-divergence}
  \int_V\sigma(y)\operatorname{div}W(y)\dd y=0
  \qquad(W\in C_c^1(V;\R^n)).
\end{equation}
To interpret this identity, recall that the distributional derivative of a locally integrable function $h$ on an open set $\Omega$ acts on smooth, compactly supported test functions by
\[
  \langle\partial_i h,\varphi\rangle
  =-\int_\Omega h\,\partial_i\varphi\dd x
  \qquad(\varphi\in C_c^\infty(\Omega)).
\]
For smooth $h$, this is the usual integration-by-parts formula; the right-hand side, however, requires only local integrability. Thus $h$ has zero distributional gradient precisely when these integrals vanish for every $i$ and every test function. This is the weak-derivative convention of \cite[Section~4.1]{EvansGariepy2015}. Taking $W=\varphi e_i$ in \eqref{eq:target-sign-weak-divergence} gives exactly this condition for $\sigma$. We recall why it implies constancy.

\begin{lemma}[Functions with vanishing weak gradient]\label{lem:weak-gradient-constant}
Let $\Omega\subset\R^n$ be connected and open, and let $h\in L^1_{\mathrm{loc}}(\Omega)$ satisfy
\[
  \int_\Omega h\,\partial_i\varphi\dd x=0
  \qquad(\varphi\in C_c^\infty(\Omega),\ 1\leq i\leq n).
\]
Then $h$ is almost everywhere equal to a constant on $\Omega$.
\end{lemma}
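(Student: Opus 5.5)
The plan is to mollify $h$, show that the mollified functions are locally constant, and then pass to the limit; connectedness of $\Omega$ makes the local constants agree.

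\emph{Step 1: mollification.} For $\eta>0$ set $\Omega_\eta=\set{x\in\Omega:\operatorname{dist}(x,\partial\Omega)>\eta}$. Let $\rho_\eta$ be a standard mollifier supported in the ball of radius $\eta$, and put $h_\eta=h*\rho_\eta$ on $\Omega_\eta$. Then $h_\eta$ is smooth on $\Omega_\eta$. Differentiating under the integral gives
\[
  \partial_ih_\eta(x)=\int_\Omega h(y)\,\partial_{x_i}\rho_\eta(x-y)\dd y
  =-\int_\Omega h(y)\,\partial_{y_i}\bigl[\rho_\eta(x-y)\bigr]\dd y .
\]
For fixed $x\in\Omega_\eta$, the function $y\mapsto\rho_\eta(x-y)$ belongs to $C_c^\infty(\Omega)$. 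The hypothesis therefore gives $\partial_ih_\eta(x)=0$ for every $i$, so $h_\eta$ is constant on each connected component of $\Omega_\eta$.

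\emph{Step 2: local constancy of $h$.} Fix a ball $B$ with $\overline{B}\subset\Omega$. For all small $\eta$, the closure $\overline{B}$ lies in $\Omega_\eta$, so $h_\eta$ equals a constant $c_\eta$ on the connected set $B$. The standard fact $h_\eta\to h$ in $L^1(B)$ \cite[Section~4.2.1]{EvansGariepy2015} forces $(c_\eta)$ to be Cauchy: indeed $\abs{c_\eta-c_{\eta'}}\,\cL^n(B)=\norm{h_\eta-h_{\eta'}}_{L^1(B)}$. Hence $c_\eta\to c_B$, and $h=c_B$ almost everywhere on $B$.

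\emph{Step 3: from local to global.} If two such balls intersect, their intersection has positive measure, so their constants coincide. Fix a ball $B_0$ and let $O$ be the union of all balls $B$ with $\overline{B}\subset\Omega$ and $c_B=c_{B_0}$. This set $O$ is open. It is also relatively closed in $\Omega$: if $x\in\Omega\cap\overline{O}$, a small ball around $x$ meets some ball in $O$, so it carries the same constant. Since $O$ is nonempty and $\Omega$ is connected, $O=\Omega$. As $\Omega$ is a countable union of such balls, $h=c_{B_0}$ almost everywhere on $\Omega$.

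The only point needing care is Step 1, namely justifying differentiation under the integral and checking that the translated mollifier is an admissible test function. Both follow from local integrability of $h$ and from $\operatorname{dist}(x,\partial\Omega)>\eta$.
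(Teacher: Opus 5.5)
Your proof is correct and follows essentially the same route as the paper: mollify, use the translated mollifier as a test function to get $h_\eta$ constant on a ball $B\Subset\Omega$, pass to the $L^1(B)$ limit, and patch the local constants using connectedness. The differences are cosmetic: you identify the limiting constant by a Cauchy argument rather than by ball averages, and you use an open--closed argument where the paper uses finite chains of overlapping balls.
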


\begin{proof}
Fix an open Euclidean ball $B\Subset\Omega$. Let $\rho_\eta$ be a nonnegative smooth mollifier of integral one, supported in the ball of radius $\eta$ about zero, and set
\[
  h_\eta(x)=\int_\Omega h(y)\rho_\eta(x-y)\dd y.
\]
For sufficiently small $\eta>0$ and $x\in B$, the function $y\mapsto\rho_\eta(x-y)$ belongs to $C_c^\infty(\Omega)$. Differentiating the convolution and applying the hypothesis to this test function gives
\[
  \begin{aligned}
    \partial_i h_\eta(x)
    &=\int_\Omega h(y)\,\partial_{x_i}\rho_\eta(x-y)\dd y\\
    &=-\int_\Omega h(y)\,\partial_{y_i}\rho_\eta(x-y)\dd y=0.
  \end{aligned}
\]
Hence the smooth function $h_\eta$ is constant on $B$, say $h_\eta=c_\eta$ there.

By mollifier convergence \cite[Theorem~4.1]{EvansGariepy2015}, $h_\eta\to h$ in $L^1(B)$. Writing $|B|=\cL^n(B)$, we have
\[
  c_\eta=\frac{1}{|B|}\int_B h_\eta\dd x
  \longrightarrow\frac{1}{|B|}\int_B h\dd x=:h_B.
\]
Thus $h=h_B$ almost everywhere on $B$.

If two such balls overlap, their intersection has positive measure, so their almost-everywhere constants agree. Connectedness joins any two of these balls by a finite chain of overlapping balls compactly contained in $\Omega$. A countable cover by such balls then gives one constant outside a single null set in $\Omega$.
\end{proof}

Because $V$ is convex, it is connected. Since $\abs{\sigma}=1$ almost everywhere, \eqref{eq:target-sign-weak-divergence} and \Cref{lem:weak-gradient-constant} yield a sign
\[
  s_\delta\in\{-1,1\}
\]
such that $\sigma=s_\delta$ almost everywhere on $V$. We may now integrate the Jacobian with its fixed sign and, by the chain rule, compute every averaged maximal minor of a linear map composed with $f$.

\begin{theorem}[Orientation of the radial extension]\label{thm:orientation-sign}
There exists $s_\delta\in\{-1,1\}$ such that
\begin{equation}\label{eq:radial-signed-volume}
  \int_{B_p}\det Df(x)\dd x=s_\delta v_q.
\end{equation}
Moreover, for every integer $N\geq n$ and every linear map $A:E\to\ell_\infty^N$,
\begin{equation}\label{eq:radial-plucker-average}
  \cA_p(A\circ f)=s_\delta\Pi_q(A).
\end{equation}
\end{theorem}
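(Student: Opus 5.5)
The first identity \eqref{eq:radial-signed-volume} is the signed change of variables with the constant sign already obtained. I take $s_\delta$ to be the constant from \Cref{lem:weak-gradient-constant}, so that $\sigma=s_\delta$ almost everywhere on $V$. I then apply \Cref{lem:signed-transfer} with $\varphi\equiv1$, which lies in $L^1(V)$ because $V$ is bounded. This gives
\[
  \int_U J_f(x)\dd x=\int_V\sigma(y)\dd y=s_\delta\cL^n(V).
\]
By \Cref{lem:sphere-null}, the boundaries $S_p=B_p\setminus U$ and $S_q=B_q\setminus V$ are null. Hence the left side equals $\int_{B_p}\det Df$ and the right side equals $s_\delta v_q$.

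For \eqref{eq:radial-plucker-average}, I fix $I\in\cI(n,N)$. Since $A$ is linear, $P_I\circ A\circ f=A_I\circ f$, and at every differentiability point of $f$ the chain rule gives $D(A_I\circ f)(x)=A_I\,Df(x)$. Multiplicativity of the determinant then yields
\[
  J_I(A\circ f)(x)=\det A_I\cdot J_f(x)
\]
almost everywhere. Differentiability points of $f$ are also differentiability points of $A\circ f$, and the exceptional null sets do not affect the integral, so the convention of \Cref{def:plucker-average} causes no trouble. Integrating over $B_p$ and using the first part gives
\[
  \int_{B_p}J_I(A\circ f)\dd x=\det A_I\cdot s_\delta v_q.
\]
Assembling over all $I$ gives $\cA_p(A\circ f)=s_\delta v_q\mathbf m(A)=s_\delta\Pi_q(A)$ by \eqref{eq:plucker-average} and \eqref{eq:normalized-plucker}. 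The sign $s_\delta$ depends only on $\delta$, not on $A$ or $N$, as required.

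The only step needing care is the passage from \eqref{eq:target-sign-weak-divergence} to the constancy of $\sigma$, which the text preceding the theorem has already carried out. Two details should be checked there. First, $\sigma$ is only defined up to null sets, but since $g$ maps null sets to null sets, $\sigma=\varepsilon\circ g$ is well defined almost everywhere. Second, $\abs{\sigma}=1$ almost everywhere forces the constant from \Cref{lem:weak-gradient-constant} to lie in $\{-1,1\}$. With these in hand, the remaining argument is bookkeeping: the chain rule for a linear map composed with a Lipschitz map, and the boundaries being null.
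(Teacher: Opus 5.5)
Your proposal is correct and follows the paper's proof essentially verbatim: you apply the signed change of variables with $\varphi\equiv1$ and the a.e.\ constant sign, use nullity of the spheres to pass from $U$ to $B_p$ and from $V$ to $B_q$, and then use the chain rule $\det D(A_I\circ f)=\det(A_I)\det Df$ coordinatewise. One small slip in your closing remark: modifying $\varepsilon$ on a null set $Z$ changes $\sigma=\varepsilon\circ g$ on $g^{-1}(Z)=f(Z)$, so what is needed is that $f$ (not $g$) maps null sets to null sets, which holds because $f$ is Lipschitz.
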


\begin{proof}
The preceding argument shows that the transported sign is almost everywhere equal to $s_\delta$. Since $V$ has finite measure, \Cref{lem:signed-transfer} applied to the constant function $1$ gives
\[
  \int_U J_f(x)\dd x
  =s_\delta\cL^n(V)
  =s_\delta v_q.
\]
By \Cref{lem:sphere-null}, the boundary of $B_p$ is null, so the same identity holds over the closed ball $B_p$.

Fix $I\in\cI(n,N)$. At differentiability points of $f$, the chain rule and multiplicativity of the determinant give
\[
  \det D(A_I\circ f)(x)=\det(A_I)\det Df(x).
\]
Integrating and using \eqref{eq:radial-signed-volume} yields
\[
  \int_{B_p}\det D(A_I\circ f)(x)\dd x
  =s_\delta v_q\det(A_I),
\]
which is the $I$th coordinate of \eqref{eq:radial-plucker-average}. Since $I$ was arbitrary, the vector identity follows.
\end{proof}

The sign $s_\delta$ depends on the chosen orientations. Central symmetry removes it when we combine the signed-volume formula with boundary-trace invariance, completing the analytic part of the proof.

\begin{theorem}[Pl\"ucker-body invariance]\label{thm:body-equality}
For the coordinate norms $p$ and $q$ associated with the sphere isometry,
\begin{equation}\label{eq:all-bodies-equal}
  \cK_p^N=\cK_q^N
  \qquad(N\geq n).
\end{equation}
\end{theorem}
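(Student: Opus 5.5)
The plan is to prove the inclusion $\cK_q^N\subseteq\cK_p^N$ and then obtain the reverse inclusion by symmetry. Since $\cK_p^N$ is convex and centrally symmetric by \Cref{prop:plucker-compact}, and $\cK_q^N$ is the convex hull of $\cR_q^N$, it suffices to show that $\Pi_q(A)\in\cK_p^N$ for every $q$-contraction $A:E\to\ell_\infty^N$. The sign $-\Pi_q(A)$ and the convex combinations are then automatic.

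Fix a $q$-contraction $A$. Let $F_A$ be the McShane extension \eqref{eq:boundary-extension} of $g_A=A\circ\delta$. By \Cref{prop:average-in-body}, $\cA_p(F_A)\in\cK_p^N$.

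Next consider the comparison map $A\circ f$, where $f=\widehat\delta$. It is Lipschitz, being linear composed with a map that is Lipschitz by \Cref{prop:radial-bilip}. On $S_p$ we have $f=\delta$, so $A\circ f=g_A=F_A$ there. Both maps are Lipschitz into $\ell_\infty^N$, hence Lipschitz in the Euclidean coordinates. \Cref{thm:trace-invariance} therefore gives $\cA_p(F_A)=\cA_p(A\circ f)$. By \Cref{thm:orientation-sign}, the latter equals $s_\delta\Pi_q(A)$. Hence $s_\delta\Pi_q(A)\in\cK_p^N$, and central symmetry yields $\Pi_q(A)\in\cK_p^N$. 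This proves $\cR_q^N\subseteq\cK_p^N$, and therefore $\cK_q^N\subseteq\cK_p^N$.

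For the reverse inclusion, apply the same argument to the inverse sphere isometry $\delta^{-1}:S_q\to S_p$, which is also a surjective isometry. Swap the roles of $p$ and $q$, using $\widehat{\delta^{-1}}$ as the radial map. All the earlier results were stated for an arbitrary pair of norms with a surjective sphere isometry, so \Cref{thm:orientation-sign} supplies some sign $s_{\delta^{-1}}$, which we never need to identify.

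The only point needing care is checking that the hypotheses of \Cref{thm:trace-invariance} hold exactly. The two maps must agree pointwise on all of $S_p$, not just almost everywhere. This holds because McShane's formula reproduces $h_j$ on $S_p$, by the $1$-Lipschitz property of $h_j$, and because $\widehat\delta|_{S_p}=\delta$ directly from \eqref{eq:radial-extension}. Everything else is bookkeeping.
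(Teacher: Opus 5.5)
Your proposal is correct and matches the paper's proof: the McShane extension $F_A$ and the radial comparison map $A\circ\widehat\delta$ share their trace on $S_p$, so trace invariance, the orientation formula, and the averaging proposition give $s_\delta\Pi_q(A)=\cA_p(F_A)\in\cK_p^N$. Central symmetry then removes the sign, and the inverse sphere isometry gives the reverse inclusion. Your explicit check that the two maps agree pointwise on $S_p$ is a detail the paper leaves implicit.
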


\begin{proof}
Fix a $q$-contraction $A:E\to\ell_\infty^N$. The McShane extension $F_A$ and the Lipschitz map $A\circ\widehat\delta$ agree on $S_p$. By \Cref{thm:trace-invariance,thm:orientation-sign,prop:average-in-body},
\[
  s_\delta\Pi_q(A)
  =\cA_p(A\circ\widehat\delta)
  =\cA_p(F_A)\in\cK_p^N.
\]
Central symmetry gives $\pm\Pi_q(A)\in\cK_p^N$. Taking convex hulls, we obtain $\cK_q^N\subseteq\cK_p^N$. The inverse sphere isometry gives the reverse inclusion.
\end{proof}

\section{Finite recovery from the contraction bodies}\label{sec:finite-recovery}

We now recover a linear isometry from the equality of contraction bodies established in \Cref{thm:body-equality}. The argument uses no further information about the original sphere isometry. Its finite step produces an almost-contractive linear map with an exact determinant--volume identity; the next section passes to an isometric limit.

\subsection{Almost maximal dual frames}

For a norm $r$ on $E$, let
\[
  B_{r^*}=\set{b\in E^*: \abs{b(x)}\leq r(x)
    \text{ for every }x\in E}
\]
be the closed unit ball of the dual norm. Equivalently,
\begin{equation}\label{eq:dual-representation}
  r(x)=\max_{b\in B_{r^*}}\abs{b(x)}
  \qquad(x\in E).
\end{equation}
This is the usual dual representation furnished by the Hahn--Banach theorem; see \cite[Corollary~III.6.5]{Conway1990}.

An ordered $n$-tuple $B=(b_1,\dots,b_n)\in(B_{r^*})^n$ will be called an \emph{$r$-admissible row system}. We also write $B$ for the corresponding $n\times n$ matrix. If $s\in E^*$ and $1\leq i\leq n$, then $B[i\leftarrow s]$ denotes the matrix obtained from $B$ by replacing its $i$th row by $s$.

Define
\begin{equation}\label{eq:det-max}
  d_r=\max\set{\abs{\det B}:B\in(B_{r^*})^n}.
\end{equation}
The maximum exists because $(B_{r^*})^n$ is compact. It is positive: by \eqref{eq:model-comparison}, the rows $\lambda_r e_1^*,\dots,\lambda_r e_n^*$ belong to $B_{r^*}$, and their determinant is $\lambda_r^n>0$.

Maximizing $\abs{\det B}$ is the classical volume method for constructing Auerbach bases \cite{Taylor1947}. We need the row-replacement estimate uniformly for frames whose determinant is close to $d_r$.

\begin{lemma}[Uniform inverse bound near the maximum]\label{lem:near-max-inverse}
Fix $0<\eta<d_r$. There is a constant $K=K(r,\eta)\geq1$ such that, whenever $B$ is an $r$-admissible row system satisfying
\[
  \abs{\det B}\geq d_r-\eta,
\]
then $B$ is invertible and
\begin{equation}\label{eq:inverse-bound}
  r(B^{-1}c)\leq K\norm{c}_\infty
  \qquad(c\in\R^n).
\end{equation}
\end{lemma}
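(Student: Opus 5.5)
Since $\eta<d_r$, any $B$ with $\abs{\det B}\geq d_r-\eta$ has $\abs{\det B}>0$ and is therefore invertible. The plan is to write $B^{-1}$ by Cramer's rule, bound its columns in the norm $r$, and make every constant depend only on $r$ and $\eta$.

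Let $x_j=B^{-1}e_j$ be the $j$th column of $B^{-1}$. For any $c\in\R^n$ we have $B^{-1}c=\sum_j c_jx_j$, so the triangle inequality gives
\[
  r(B^{-1}c)\leq\norm{c}_\infty\sum_{j=1}^n r(x_j).
\]
It therefore suffices to bound each $r(x_j)$ uniformly. By the dual representation \eqref{eq:dual-representation}, $r(x_j)=\max_{s\in B_{r^*}}\abs{s(x_j)}$, so we need a uniform bound on $\abs{s(x_j)}$ for $s\in B_{r^*}$.

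For fixed $s\in B_{r^*}$ and index $j$, apply Cramer's rule to the row system $B$. Since $Bx_j=e_j$, expanding $\det B[j\leftarrow s]$ along row $j$ in cofactors of $B$ gives
\[
  \det B[j\leftarrow s]=\sum_k s_k C_{jk}(B)=(\det B)\,s(x_j),
\]
because the $j$th column of $B^{-1}$ is $(\det B)^{-1}(C_{jk}(B))_k$. The matrix $B[j\leftarrow s]$ is again an $r$-admissible row system, so its determinant has absolute value at most $d_r$ by \eqref{eq:det-max}. Hence
\[
  \abs{s(x_j)}=\frac{\abs{\det B[j\leftarrow s]}}{\abs{\det B}}\leq\frac{d_r}{d_r-\eta}.
\]
Maximizing over $s$ gives $r(x_j)\leq d_r/(d_r-\eta)$.

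Summing over $j$ yields \eqref{eq:inverse-bound} with
\[
  K=\frac{n\,d_r}{d_r-\eta}\geq1.
\]
This constant depends only on $r$, through $d_r$, on $\eta$, and on the fixed dimension $n$. No compactness argument is needed beyond the existence of the maximum $d_r$, which the paper already provides. The only step requiring care is the Cramer identity: one must use the row convention, under which $s(x)=\sum_k s_kx_k$, so that replacing a row by $s$ produces exactly $(\det B)\,s(x_j)$. Apart from that, the argument is the classical Auerbach maximal-determinant estimate, relaxed by the factor $d_r/(d_r-\eta)$ to allow frames that are only near-maximal.
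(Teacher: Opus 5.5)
Your proposal is correct and follows essentially the paper's route: the Cramer/row-replacement identity $\det B[j\leftarrow s]=(\det B)\,s(x_j)$, admissibility of $B[j\leftarrow s]$, and maximality of $d_r$, giving the same constant $K=n\,d_r/(d_r-\eta)$. The only cosmetic difference is that you apply the identity column by column and then use the triangle inequality, whereas the paper applies the combined identity $\sum_i c_i\det B[i\leftarrow s]=(\det B)\,s(B^{-1}c)$ to $c$ directly.
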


\begin{proof}
The lower bound on $\abs{\det B}$ makes $B$ invertible. For $s\in B_{r^*}$, multilinearity in the replaced row gives
\begin{equation}\label{eq:cramer-row}
  \sum_{i=1}^n c_i\det B[i\leftarrow s]
  =(\det B)\,s(B^{-1}c).
\end{equation}
Indeed, writing $s=\alpha B$ gives $\det B[i\leftarrow s]=\alpha_i\det B$. Each replacement matrix is $r$-admissible, so
\[
  \abs{s(B^{-1}c)}
  \leq\frac{n d_r}{d_r-\eta}\norm{c}_\infty.
\]
Taking the maximum in \eqref{eq:dual-representation} proves \eqref{eq:inverse-bound} with $K=n d_r/(d_r-\eta)\geq1$.
\end{proof}

\begin{lemma}[Finite coefficient net]\label{lem:finite-coefficient-net}
Fix $0<\eta<d_r$, let $K\geq1$ be as in \Cref{lem:near-max-inverse}, and let $\rho>0$. There exists a finite set
\[
  C\subset\set{c\in\R^n:K^{-1}\leq\norm{c}_\infty\leq1}
\]
with the following property: for every $r$-admissible row system $B$ satisfying $\abs{\det B}\geq d_r-\eta$ and every $x\in S_r$, there exists $c\in C$ such that
\begin{equation}\label{eq:coefficient-approximation}
  r(x-B^{-1}c)\leq K\rho.
\end{equation}
\end{lemma}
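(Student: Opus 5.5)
The plan is to take $C$ to be a finite $\rho$-net, in the supremum norm, of the compact annulus $\set{c\in\R^n:K^{-1}\leq\norm{c}_\infty\leq1}$, chosen inside that annulus. Such a net exists because the annulus is compact.

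Given an admissible $B$ with $\abs{\det B}\geq d_r-\eta$ and a point $x\in S_r$, we put $c_0=Bx$.

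\emph{Upper bound on $c_0$.} Each row $b_i$ lies in $B_{r^*}$, so $\abs{(c_0)_i}=\abs{b_i(x)}\leq r(x)=1$. Hence $\norm{c_0}_\infty\leq1$.

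\emph{Lower bound on $c_0$.} \Cref{lem:near-max-inverse} shows that $B$ is invertible and gives
\[
  1=r(x)=r(B^{-1}c_0)\leq K\norm{c_0}_\infty .
\]
Hence $\norm{c_0}_\infty\geq K^{-1}$, and $c_0$ lies in the annulus.

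\emph{Choosing $c$.} Pick $c\in C$ with $\norm{c_0-c}_\infty\leq\rho$. Applying \Cref{lem:near-max-inverse} once more to the vector $c_0-c$ gives
\[
  r(x-B^{-1}c)=r\bigl(B^{-1}(c_0-c)\bigr)\leq K\norm{c_0-c}_\infty\leq K\rho,
\]
which is \eqref{eq:coefficient-approximation}.

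The important point is that $C$ depends only on $K$ and $\rho$, not on $B$ or $x$. This uniformity comes from \Cref{lem:near-max-inverse}, whose constant $K$ is uniform over all near-maximal frames. So the only genuine ingredient is the inverse bound from the preceding lemma, and no step is expected to be an obstacle.

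One technical point is making sure the net sits inside the annulus. We obtain this by covering the annulus with finitely many $\rho$-balls and selecting, from each ball that meets the annulus, one point of the annulus.
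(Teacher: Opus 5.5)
Your argument is correct and is essentially the paper's own proof. It uses a finite $\rho$-net of the annulus, shows that $Bx$ lies in the annulus by contractivity of the rows together with the inverse bound of \Cref{lem:near-max-inverse}, and applies that bound a second time to $Bx-c$. One small repair to your closing remark: picking an annulus point from each covering $\rho$-ball only yields a $2\rho$-net, so either use balls of radius $\rho/2$ or, more simply, extract a finite subcover from the balls of radius $\rho$ centred at points of the annulus.
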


\begin{proof}
Choose a finite $\rho$-net $C$ in the compact annulus
\[
  \set{c\in\R^n:K^{-1}\leq\norm{c}_\infty\leq1}.
\]
For an admissible $B$ as in the statement and $x\in S_r$, contractivity and \eqref{eq:inverse-bound} give
\[
  1=r(x)=r(B^{-1}Bx)\leq K\norm{Bx}_\infty,
  \qquad \norm{Bx}_\infty\leq1.
\]
Thus $Bx$ belongs to this annulus. Choose $c\in C$ with $\norm{Bx-c}_\infty\leq\rho$ and apply \eqref{eq:inverse-bound} to obtain
\[
  r(x-B^{-1}c)=r\bigl(B^{-1}(Bx-c)\bigr)\leq K\rho.\qedhere
\]
\end{proof}

\subsection{Satellite rows and an almost-norming exposed face}

A single almost-norming family is not enough: the common raw point selected later may be represented by a different family. We therefore seek a supporting functional for which \emph{every raw maximizing configuration} has an invertible leading block and almost norms $S_r$. We fix the coefficient net before jointly optimizing the leading block $B$ and the satellite rows $s_c$. Maximality will force $\abs{s_c(B^{-1}c)}=r(B^{-1}c)$ for every $c$; the uniform conclusion in (iv) below is what is needed at the common raw point.

\begin{proposition}[An exposed face of almost-norming configurations]\label{prop:satellite-package}
Let $r$ be a norm on $E$ and let $0<\varepsilon<1$. Then there exist an integer $N\geq n$, an $r$-contraction
\[
  A:E\longrightarrow\ell_\infty^N,
\]
and a linear functional $\ell\in\cP_{n,N}^*$ such that:
\begin{enumerate}
  \item[(i)] $\norm{Ax}_\infty>1-\varepsilon$ for every $x\in S_r$;
  \item[(ii)] the first $n$ rows of $A$ form an invertible matrix;
  \item[(iii)] $\Pi_r(A)$ belongs to the raw exposed slice
  \[
    \set{z\in\cR_r^N:
      \ell(z)=\max_{w\in\cK_r^N}\ell(w)};
  \]
  \item[(iv)] for every $r$-contraction $A':E\to\ell_\infty^N$ and every sign $\tau\in\{-1,1\}$ such that
  \[
    \ell(\tau\Pi_r(A'))=\max_{z\in\cK_r^N}\ell(z),
  \]
  the map $A'$ satisfies \emph{(i)} and \emph{(ii)}.
\end{enumerate}
\end{proposition}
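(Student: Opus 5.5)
The plan is to let the maximal minors of a single contraction encode two things at once: a leading $n\times n$ frame $B$, and, for each point $c$ of a finite coefficient net, one satellite row $s_c$. The functional $\ell$ will be a linear combination of Plücker coordinates that evaluates to
\[
  \det B\Bigl(\lambda+\sum_{c\in C}s_c(B^{-1}c)\Bigr),
\]
with $\lambda$ large. Maximizing over contractions should then force $\abs{\det B}$ close to $d_r$ and force each $s_c$ to norm $B^{-1}c$. \Cref{lem:finite-coefficient-net} converts this into (i).

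The parameters are chosen in a fixed order, since later ones depend on earlier ones.
\begin{enumerate}
  \item[1.] Fix any $0<\eta<d_r$ and take $K=K(r,\eta)$ from \Cref{lem:near-max-inverse}.
  \item[2.] Choose $\rho>0$ with $2K\rho<\varepsilon$.
  \item[3.] Take the finite net $C$ of \Cref{lem:finite-coefficient-net}, with $M=\abs{C}$ and $N=n+M$.
  \item[4.] Choose $\lambda>0$ so large that $Mnd_r/\lambda\leq\eta$.
\end{enumerate}
Index the last $M$ rows of a matrix by $c\in C$. For $1\leq i\leq n$, let $I(i,c)\in\cI(n,N)$ be obtained from $\{1,\dots,n\}$ by replacing $i$ with the index of row $c$. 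Moving that row into position $i$ costs the sign $\theta_{i}=(-1)^{n-i}$. Define
\[
  \ell(z)=\lambda z_{\{1,\dots,n\}}+\sum_{c\in C}\sum_{i=1}^n\theta_{i}c_i\,z_{I(i,c)}.
\]
For a contraction $A$ with leading block $B$ and satellite rows $s_c$, the identity \eqref{eq:cramer-row} then gives
\[
  \ell(\mathbf m(A))=\det B\Bigl(\lambda+\sum_c s_c(B^{-1}c)\Bigr)
  =\lambda\det B+\sum_c\sum_i c_i\det B[i\leftarrow s_c].
\]
By the definition of $d_r$, each term of the second sum is at most $d_r$ in absolute value, so the second sum has absolute value at most $Mnd_r$. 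This bound uses only admissibility of the replaced frames, not any closeness to the maximum.

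Next come the maximization and the verification of the four properties. Since $\ell$ is linear, $\max_{\cK_r^N}\ell=\max_{\cR_r^N}\ell$, and the latter is attained because $\cR_r^N$ is compact (\Cref{prop:plucker-compact}). Fix $\tau$ and a contraction $A'$ attaining it.

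For fixed $B$, the satellite rows range independently over $B_{r^*}$. Hence the supremum of $\tau\det B\,\bigl(\lambda+\sum_c s_c(B^{-1}c)\bigr)$ over the satellite rows, with $\tau$ also free, equals
\[
  \abs{\det B}\Bigl(\lambda+\sum_c r(B^{-1}c)\Bigr),
\]
using \eqref{eq:dual-representation}. This shows two things.
\begin{itemize}
  \item[(a)] The maximum is at least $\lambda d_r$, attained by taking $B$ to realize $d_r$ and norming satellites.
  \item[(b)] A maximizer must satisfy $\lambda\abs{\det B}+Mnd_r\geq\lambda d_r$, hence $\abs{\det B}\geq d_r-\eta>0$. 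This is (ii).
\end{itemize}
Given (b), any satellite row failing $\tau\det B\,s_c(B^{-1}c)=\abs{\det B}\,r(B^{-1}c)$ could be replaced by a norming functional, strictly increasing $\ell$. Therefore every maximizer has $\abs{s_c(B^{-1}c)}=r(B^{-1}c)$ for every $c\in C$.

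For (i), let $x\in S_r$ and choose $c\in C$ with $r(x-B^{-1}c)\leq K\rho$ by \Cref{lem:finite-coefficient-net}. Then $r(B^{-1}c)\geq1-K\rho$, and so
\[
  \abs{s_c(x)}\geq\abs{s_c(B^{-1}c)}-K\rho\geq1-2K\rho>1-\varepsilon.
\]
This proves (iv). Taking $A$ to be any maximizer, with its sign absorbed, gives (iii), and then (i) and (ii) hold for $A$ by (iv).

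I expect the main obstacle to be the apparent circularity that the inverse bound $K$ is valid only for near-maximal frames, while the net $C$, and hence $M$ and $\lambda$, depend on $K$. The resolution is the a priori bound $\abs{\sum_i c_i\det B[i\leftarrow s]}\leq nd_r$, valid for every admissible frame. It lets $\lambda$ be chosen after $C$ without any loss, and near-maximality of the leading block is then deduced rather than assumed. The remaining bookkeeping is the sign $\theta_i$ for reordering rows into increasing index order, and the observation that $\tau$ is harmless because maximality is checked for the signed quantity $\tau\ell(\mathbf m(A'))$.
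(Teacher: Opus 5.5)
Your proposal is correct and follows essentially the same route as the paper: a leading frame plus one satellite row per net point, encoded as a Pl\"ucker functional with signs $(-1)^{n-i}$. The weight ($\lambda$ in your version, $w=2mn+1$ in the paper) is chosen after the net using the a priori bound $mnd_r$, so that maximality forces $\abs{\det B}\geq d_r-\eta$ and forces every satellite row to be norming. The one step to make explicit is that the sign $\tau$ should be absorbed into the functional (replace $\ell$ by $\tau\ell$, which preserves (iv) by central symmetry of $\cK_r^N$) rather than into $A$; the paper does exactly this by setting $\ell=\sgn(P_w(A))\ell_0$.
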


\begin{proof}
The parameters are fixed in the order
\[
  \eta\longrightarrow K\longrightarrow\rho\longrightarrow C
  \longrightarrow w\longrightarrow\text{a maximizer of }\abs{P_w}.
\]
In particular, the coefficient net does not depend on the maximizing configuration.

\emph{Step 1: a maximizing configuration and its leading determinant.}
Set $\eta=d_r/2$. The explicit estimate in \Cref{lem:near-max-inverse} permits $K=2n$. Choose $\rho=\varepsilon/(8n)$, so that
\begin{equation}\label{eq:satellite-radius}
  2K\rho=\varepsilon/2<\varepsilon.
\end{equation}
Fix a coefficient net $C=\{c^{(1)},\ldots,c^{(m)}\}$ from \Cref{lem:finite-coefficient-net}, and set $w=2mn+1$. A configuration consists of an admissible base matrix $B$ and satellite rows $s_c\in B_{r^*}$, one for each $c\in C$. On the compact configuration space $(B_{r^*})^{n+m}$, consider
\begin{equation}\label{eq:satellite-poly}
  P_w\bigl(B,(s_c)_{c\in C}\bigr)
  =w\det B+\sum_{c\in C}\sum_{i=1}^n
     c_i\det B[i\leftarrow s_c].
\end{equation}
The satellite sum has absolute value at most $M_C=mnd_r$, since $\norm{c}_\infty\leq1$ and every replacement determinant has absolute value at most $d_r$. Our choice of $w$ gives
\begin{equation}\label{eq:satellite-gap}
  M_C=mnd_r<(2mn+1)d_r/2=w\eta.
\end{equation}
Let $(B,(s_c))$ maximize $\abs{P_w}$, with maximum $M$. A determinant-maximizing base with all satellites zero gives $M\geq wd_r$, whence
\begin{equation}\label{eq:maximizer-determinant-gap}
  wd_r\leq M\leq w\abs{\det B}+M_C,
  \qquad
  \abs{\det B}\geq d_r-M_C/w>d_r-\eta>0.
\end{equation}
Thus the same inverse bound $K$ applies to the leading block of every maximizing configuration.

\emph{Step 2: maximality forces each satellite row to be norming.}
Fix $c\in C$, put $x_c=B^{-1}c$ and $D=\det B$, and hold all rows other than $s_c$ fixed. By \eqref{eq:cramer-row}, the resulting polynomial is $R+D s_c(x_c)$ for a constant $R$. The dual ball is symmetric and contains a norming functional for $x_c$, so maximality gives
\[
  M=\max_{s\in B_{r^*}}\abs{R+D s(x_c)}
   =\abs R+\abs D\,r(x_c).
\]
The chosen row $s_c$ attains this maximum. Since $D\neq0$, the triangle inequality forces
\[
  \abs{s_c(B^{-1}c)}=r(B^{-1}c)
  \qquad(c\in C).
\]

\emph{Step 3: the stacked contraction almost norms the sphere.}
Let $N=n+m$ and stack the base rows followed by the satellites to form $A:E\to\ell_\infty^N$. All rows lie in $B_{r^*}$, so $A$ is an $r$-contraction. For $x\in S_r$, choose $c\in C$ with $r(x-x_c)\leq K\rho$, as in \Cref{lem:finite-coefficient-net}. Then
\begin{equation}\label{eq:satellite-almost-norming}
  \begin{aligned}
    \norm{Ax}_\infty
    &\geq\abs{s_c(x)}
     \geq r(x_c)-r(x-x_c)\\
    &\geq1-2r(x-x_c)
     \geq1-2K\rho>1-\varepsilon.
  \end{aligned}
\end{equation}
This proves (i); (ii) follows from \eqref{eq:maximizer-determinant-gap}.

\emph{Step 4: encoding the polynomial as a Pl\"ucker functional.}
Let $I_0=\{1,\ldots,n\}$ and $I_{\nu,i}=I_0\setminus\{i\}\cup\{n+\nu\}$. For any configuration $A'$ with leading block $B'$ and satellites $s'_{c^{(\nu)}}$, the fixed increasing row order gives
\[
  \det A'_{I_{\nu,i}}
  =(-1)^{n-i}\det B'[i\leftarrow s'_{c^{(\nu)}}].
\]
Consequently,
\[
  \ell_0(z)=wz_{I_0}
     +\sum_{\nu=1}^m\sum_{i=1}^n
       (-1)^{n-i}c_i^{(\nu)}z_{I_{\nu,i}}
\]
satisfies $\ell_0(\Pi_r(A'))=v_rP_w(A')$. Since $M\geq wd_r>0$, we may set $\ell=\sgn(P_w(A))\ell_0$. For every raw point $\tau\Pi_r(A')$, with $\tau\in\{-1,1\}$,
\begin{equation}\label{eq:raw-maximality}
  \ell(\tau\Pi_r(A'))\leq v_r\abs{P_w(A')}
    \leq v_rM=\ell(\Pi_r(A)).
\end{equation}
The maximum of a linear functional is unchanged on passing to the convex hull. Thus $\Pi_r(A)$ maximizes $\ell$ on $\cK_r^N$, proving (iii).

\emph{Step 5: verification of (i) and (ii) for every raw maximizer.}
Let $z=\tau\Pi_r(A')$ satisfy
$\ell(z)=\max_{\cK_r^N}\ell=v_rM$.
Equality throughout \eqref{eq:raw-maximality} gives
$\abs{P_w(A')}=M$.
Thus $A'$ is another maximizing configuration for the same fixed
parameters $\eta,K,\rho,C,w$.
Step~1 gives (ii), and Steps~2--3 give (i).
This proves (iv).
\end{proof}

\subsection{A common raw point and linear recovery}

Compact generating sets with the same convex hull contain all its extreme points. Applying this observation to a supporting face gives the common raw point we need.

\begin{lemma}[Milman's converse on a supporting face]\label{lem:common-raw-generator}
Let $R_1$ and $R_2$ be nonempty compact subsets of a finite-dimensional real vector space, and suppose that
\[
  \conv R_1=\conv R_2=:K.
\]
For every linear functional $\ell$, there exists a point
\[
  z\in R_1\cap R_2
\]
that maximizes $\ell$ on $K$.
\end{lemma}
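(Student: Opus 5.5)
The plan is to pass to the exposed face and show that one of its extreme points lies in both generating sets. Let $K=\conv R_1=\conv R_2$; it is compact by Carath\'eodory's theorem, as in \Cref{prop:plucker-compact}. Set
\[
  \alpha=\max_{K}\ell,\qquad F=\set{x\in K:\ell(x)=\alpha}.
\]
The maximum exists because $K$ is compact. Thus $F$ is a nonempty compact convex face of $K$, and it has an extreme point $z$ by the finite-dimensional Krein--Milman (or Minkowski) theorem. Since $F$ is a face of $K$, $z$ is also an extreme point of $K$: a representation $z=tx+(1-t)y$ with $x,y\in K$ and $0<t<1$ forces $\ell(x)=\ell(y)=\alpha$, because both values are at most $\alpha$. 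Hence $x,y\in F$, and extremality of $z$ in $F$ gives $x=y=z$.

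Next I would show that every extreme point of $K=\conv R_j$ lies in $R_j$; this is Milman's converse in its finite-dimensional form. By Carath\'eodory's theorem, $z$ is a finite convex combination $z=\sum_k t_kx_k$ with $x_k\in R_j$ and $t_k>0$. Extremality of $z$ in $K$ forces every $x_k$ to equal $z$. Indeed, if $t_1<1$, write
\[
  z=t_1x_1+(1-t_1)y,\qquad y=\frac{1}{1-t_1}\sum_{k\geq2}t_kx_k\in K,
\]
which gives $x_1=z$ (and trivially so if $t_1=1$). Compactness of $R_j$ is what makes $\conv R_j$ closed, so that no closure is needed and the extreme point is genuinely represented by points of $R_j$. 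Applying this for $j=1,2$ yields $z\in R_1\cap R_2$, and by construction $\ell(z)=\alpha=\max_K\ell$.

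No step is a real obstacle. The only point needing care is the face-transitivity of extreme points, which passes from $F$ to $K$; the rest is Carath\'eodory's theorem and the definition of an extreme point.
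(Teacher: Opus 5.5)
Your proof is correct and follows the paper's argument: you take an extreme point of the exposed face $F$, observe that it is extreme in $K$, and conclude that it lies in both $R_1$ and $R_2$. The only difference is that you verify face-transitivity and the finite-dimensional Milman converse directly from finite convex combinations, whereas the paper cites Milman's converse to the Krein--Milman theorem (Rudin, Theorem~3.25).
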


\begin{proof}
Choose an extreme point $z$ of the nonempty compact face $F=\{x\in K:\ell(x)=\max_K\ell\}$. Then $z$ is extreme in $K$. Milman's converse to the Krein--Milman theorem \cite[Theorem~3.25]{Rudin1991} gives $\operatorname{ext}(K)\subseteq R_i$ for $i=1,2$, and therefore $z\in R_1\cap R_2$, as required.
\end{proof}

A nonzero leading minor converts equality of normalized Pl\"ucker vectors into a linear factorization. The required row identities follow from Cramer's rule.

\begin{lemma}[Factorization from a common Pl\"ucker vector]\label{lem:anchored-factorization}
Let
\[
  T:(E,p)\to\ell_\infty^N,
  \qquad
  A:(E,q)\to\ell_\infty^N
\]
be contractions. Suppose that, for some signs $\sigma_p,\sigma_q\in\{-1,1\}$,
\begin{equation}\label{eq:common-plucker-point}
  \sigma_p v_p\mathbf m(T)=\sigma_q v_q\mathbf m(A),
\end{equation}
and that the first $n$ rows of $T$ and $A$ form invertible matrices $T_0$ and $A_0$. Define
\[
  L=A_0^{-1}T_0.
\]
Then $L$ is invertible,
\begin{equation}\label{eq:factorization}
  AL=T,
\end{equation}
and
\begin{equation}\label{eq:factorization-volume}
  v_p\abs{\det L}=v_q.
\end{equation}
If, in addition, for some $0<\varepsilon<1$ we have
\[
  \norm{Ay}_\infty>1-\varepsilon
  \qquad(y\in S_q),
\]
then
\begin{equation}\label{eq:anchored-lower-estimate}
  (1-\varepsilon)q(Lx)\leq p(x)
  \qquad(x\in E).
\end{equation}
\end{lemma}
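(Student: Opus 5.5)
The plan is to compare the Pl\"ucker vectors coordinate by coordinate, using the nonzero leading minor as an anchor. Evaluating \eqref{eq:common-plucker-point} at $I_0=\{1,\ldots,n\}$ gives
\[
  \sigma_p v_p\det T_0=\sigma_q v_q\det A_0 .
\]
Both determinants are nonzero by hypothesis, so $L=A_0^{-1}T_0$ is invertible. Moreover $\det L=\det T_0/\det A_0=\sigma_p\sigma_q v_q/v_p$, and taking absolute values yields \eqref{eq:factorization-volume}. Set $\kappa=\sigma_p\sigma_q\,v_p/v_q$. The same computation shows that $\kappa\det L=1$ and that \eqref{eq:common-plucker-point} reads $\mathbf m(T)=\kappa^{-1}\mathbf m(A)$.

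For \eqref{eq:factorization}, I would prove it row by row using Cramer's rule in the form \eqref{eq:cramer-row}. Fix a row index $j>n$ and $i\in\{1,\ldots,n\}$. The index set $I_0\setminus\{i\}\cup\{j\}$ in increasing order produces, as in Step~4 of \Cref{prop:satellite-package}, the minor
\[
  (-1)^{n-i}\det T_0[i\leftarrow t_j].
\]
The analogous formula holds for $A$. Hence the coordinate identities give
\[
  \det T_0[i\leftarrow t_j]=\kappa^{-1}\det A_0[i\leftarrow a_j]\qquad(1\leq i\leq n).
\]
Write $t_j=\beta T_0$ and $a_j=\alpha A_0$. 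Then $\det T_0[i\leftarrow t_j]=\beta_i\det T_0$ and $\det A_0[i\leftarrow a_j]=\alpha_i\det A_0$. Since $\det T_0=\kappa^{-1}\det A_0\neq0$, we get $\beta=\alpha$. Therefore
\[
  t_j=\alpha T_0=\alpha A_0L=a_jL .
\]
For the leading rows, $A_0L=T_0$ holds by definition. Together these give $AL=T$. The only point needing care is the sign bookkeeping: the sign $(-1)^{n-i}$ is identical on both sides, and the factor $\kappa$ cancels only after dividing by the leading minor. So the argument must use the ratios $\det(\cdot)_I/\det(\cdot)_{I_0}$ rather than the raw minors, which makes the scaling and signs drop out.

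For \eqref{eq:anchored-lower-estimate}, take $x\neq0$ and put $y=Lx/q(Lx)\in S_q$; this is legitimate since $L$ is invertible, so $Lx\neq0$. By hypothesis $\norm{Ay}_\infty>1-\varepsilon$. Using \eqref{eq:factorization} and the fact that $T$ is a $p$-contraction,
\[
  (1-\varepsilon)q(Lx)<\norm{ALx}_\infty=\norm{Tx}_\infty\leq p(x).
\]
The case $x=0$ is trivial.

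I expect the main obstacle to be the correct identification of the permuted minors with the row-replacement determinants, including their signs. Everything else is linear algebra once the ratio formulation is adopted.
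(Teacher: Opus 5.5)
Your proposal is correct and follows essentially the same route as the paper: you read the volume identity off the leading coordinate, identify the minors on $I_0\setminus\{i\}\cup\{j\}$ with the row-replacement determinants carrying the common sign $(-1)^{n-i}$, divide by the nonzero leading minor to get $a_jA_0^{-1}=t_jT_0^{-1}$ (hence $AL=T$), and then combine $AL=T$ with contractivity of $T$ for the lower estimate. The only cosmetic difference is that you normalize $Lx$ onto $S_q$ directly, whereas the paper first extends the bound $(1-\varepsilon)q(y)\leq\norm{Ay}_\infty$ to all $y$ by homogeneity.
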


\begin{proof}
Write $a_j$ and $t_j$ for the rows of $A$ and $T$, respectively. The leading blocks satisfy $A_0L=T_0$. For $j>n$ and $1\leq k\leq n$, the minor on the increasing row set $\{1,\ldots,n\}\setminus\{k\}\cup\{j\}$ equals $(-1)^{n-k}\det A_0[k\leftarrow a_j]$, and the same sign occurs for $T$. Dividing this coordinate of \eqref{eq:common-plucker-point} by the nonzero leading coordinate and using Cramer's rule gives
\begin{equation}\label{eq:replacement-ratios}
  (a_jA_0^{-1})_k
  =\frac{\det A_0[k\leftarrow a_j]}{\det A_0}
  =\frac{\det T_0[k\leftarrow t_j]}{\det T_0}
  =(t_jT_0^{-1})_k.
\end{equation}
Thus $a_jA_0^{-1}=t_jT_0^{-1}$, so $a_jL=t_j$. Together with the leading rows, this proves $AL=T$.

The leading coordinate also gives
\[
  \sigma_pv_p\det T_0=\sigma_qv_q\det A_0.
\]
Substitute $T_0=A_0L$, cancel $\det A_0$, and take absolute values to obtain \eqref{eq:factorization-volume}.

Finally, homogeneity extends the lower bound on $S_q$ to $(1-\varepsilon)q(y)\leq\norm{Ay}_\infty$ for every $y\in E$. Therefore
\[
  (1-\varepsilon)q(Lx)
  \leq\norm{ALx}_\infty
  =\norm{Tx}_\infty\leq p(x),
\]
which is \eqref{eq:anchored-lower-estimate}.
\end{proof}

\begin{theorem}[Finite recovery]\label{thm:recovery-certificate}
Let $p$ and $q$ be norms on $E$ with $\cK_p^N=\cK_q^N$ for every $N\geq n$. For every $0<\varepsilon<1$, there exists a bijective linear map $L_\varepsilon:E\to E$ such that
\begin{equation}\label{eq:finite-recovery-norm}
  (1-\varepsilon)q(L_\varepsilon x)\leq p(x)
  \qquad(x\in E)
\end{equation}
and
\begin{equation}\label{eq:finite-recovery-volume}
  v_p\abs{\det L_\varepsilon}=v_q.
\end{equation}
\end{theorem}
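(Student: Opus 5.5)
The plan is to combine \Cref{prop:satellite-package}, \Cref{lem:common-raw-generator}, and \Cref{lem:anchored-factorization}, with the almost-norming package built for the \emph{target} norm $q$.

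First, apply \Cref{prop:satellite-package} to the norm $r=q$ and the given $\varepsilon$. This yields an integer $N\geq n$, a $q$-contraction $A_*$, and a functional $\ell\in\cP_{n,N}^*$. The important output is property (iv): every raw point of $\cR_q^N$ on which $\ell$ attains its maximum over $\cK_q^N$ has the form $\tau\Pi_q(A')$, where $A'$ is a $q$-contraction with invertible leading $n\times n$ block and $\norm{A'y}_\infty>1-\varepsilon$ on $S_q$. The integer $N$ is dictated by the construction for $q$. This causes no difficulty, because the hypothesis gives $\cK_p^N=\cK_q^N=:K$ for this particular $N$.

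Next, use \Cref{prop:plucker-compact}: the raw sets $\cR_p^N$ and $\cR_q^N$ are nonempty and compact, and they have the common convex hull $K$. \Cref{lem:common-raw-generator} then gives a point $z\in\cR_p^N\cap\cR_q^N$ maximizing $\ell$ on $K$. We can therefore write
\[
  z=\sigma_p\Pi_p(T)=\sigma_q\Pi_q(A)
\]
with $T$ a $p$-contraction, $A$ a $q$-contraction, and signs $\sigma_p,\sigma_q$. Since $z$ maximizes $\ell$ and $\max_K\ell$ is the same number in both descriptions, property (iv) applies to $A$. So $A$ has invertible leading block $A_0$ and almost norms $S_q$.

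The delicate point, and the main obstacle, is that \Cref{lem:anchored-factorization} also needs the leading block $T_0$ of $T$ to be invertible. (iv) gives us nothing about $T$ directly, because it was proved for $q$-contractions only. I would obtain it from the Plücker identity itself. The leading coordinate of $z$ is $\sigma_q v_q\det A_0\neq0$, and it equals $\sigma_p v_p\det T_0$. Since $v_p>0$, this forces $\det T_0\neq0$. With that settled, \Cref{lem:anchored-factorization} applies with $L_\varepsilon=A_0^{-1}T_0$. It gives invertibility, the factorization $AL_\varepsilon=T$, the exact identity $v_p\abs{\det L_\varepsilon}=v_q$, and, from the almost-norming property of $A$ on $S_q$, the estimate $(1-\varepsilon)q(L_\varepsilon x)\leq p(x)$.

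I would close by checking that no choice made depends on $p$. The net, the weight $w$, and the functional $\ell$ all come from $q$ alone, and $p$ enters only through the equality of bodies. This ensures the argument is not circular.
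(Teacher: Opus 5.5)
Your proposal is correct and follows essentially the same route as the paper: apply \Cref{prop:satellite-package} to $q$, use \Cref{lem:common-raw-generator} to obtain a common raw maximizer $z\in\cR_p^N\cap\cR_q^N$, use property (iv) to control $A$, get invertibility of $T_0$ from the nonzero leading coordinate of $z$, and conclude with \Cref{lem:anchored-factorization}. Your explicit appeal to \Cref{prop:plucker-compact} for compactness of the raw sets is the hypothesis of \Cref{lem:common-raw-generator} that the paper's proof uses without comment.
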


\begin{proof}
Apply \Cref{prop:satellite-package} to $q$ and $\varepsilon$, obtaining $N$ and a supporting functional $\ell$. By the equality of the bodies and \Cref{lem:common-raw-generator}, there is a point $z\in\cR_p^N\cap\cR_q^N$ maximizing $\ell$ on the common body. Choose any representations
\[
  z=\sigma_pv_p\mathbf m(T)=\sigma_qv_q\mathbf m(A) \qquad (\sigma_p,\sigma_q\in\{-1,1\}),
\]
with $T$ a $p$-contraction and $A$ a $q$-contraction. Property (iv) of \Cref{prop:satellite-package} ensures that $A$ almost norms $S_q$ and has an invertible leading block. The leading coordinate of $z$ is nonzero, so the leading block of $T$ is invertible as well. The map $L_\varepsilon=A_0^{-1}T_0$ now satisfies both assertions by \Cref{lem:anchored-factorization}.
\end{proof}

\section{Exact recovery and proof of the main theorem}\label{sec:compact-limit}

We now pass from the maps furnished by \Cref{thm:recovery-certificate} to an exact isometry. The argument uses their one-sided norm estimates and exact determinant identities.

\begin{theorem}[Coordinate rigidity]\label{thm:coordinate-rigidity}
Let $p$ and $q$ be norms on $E=\R^n$. If
\[
  \cK_p^N=\cK_q^N
  \qquad\text{for every }N\geq n,
\]
then there exists a surjective linear isometry $L:(E,p)\to(E,q)$.
\end{theorem}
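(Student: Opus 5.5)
We apply \Cref{thm:recovery-certificate} along a sequence $\varepsilon_k\downarrow0$, which gives bijective linear maps $L_k:=L_{\varepsilon_k}$ satisfying
\[
  (1-\varepsilon_k)q(L_kx)\leq p(x),
  \qquad
  v_p\abs{\det L_k}=v_q .
\]
First we show that the sequence $(L_k)$ is bounded. With $\varepsilon_k\leq1/2$ we get $q(L_kx)\leq 2p(x)$. Together with the comparison constants in \eqref{eq:model-comparison}, this bounds the operator norms of $L_k$ in the fixed $\ell_\infty$ matrix norm uniformly in $k$. Since the space of $n\times n$ matrices is finite-dimensional, some subsequence converges to a linear map $L$. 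Letting $k\to\infty$ in both relations, using continuity of $q$ and of the determinant, yields
\[
  q(Lx)\leq p(x)\quad(x\in E),
  \qquad
  v_p\abs{\det L}=v_q>0 .
\]
In particular $L$ is invertible, and the first inequality says $L(B_p)\subseteq B_q$.

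Next we show the inclusion is an equality. The volume identity gives $\cL^n(L(B_p))=\abs{\det L}\,v_p=v_q=\cL^n(B_q)$. Suppose some $y_0\in B_q$ lies outside the compact convex set $L(B_p)$. We can take $y_0$ in the interior of $B_q$: if $y_0\in S_q$, then $ty_0$ for $t<1$ close to $1$ is interior to $B_q$ and, because $L(B_p)$ is closed, still outside it. The complement of $L(B_p)$ is open, so a small Euclidean ball around $y_0$ lies in $\operatorname{int}B_q\setminus L(B_p)$. Then
\[
  \cL^n\bigl(B_q\setminus L(B_p)\bigr)>0,
\]
which contradicts equality of the volumes together with $L(B_p)\subseteq B_q$. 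Hence $L(B_p)=B_q$.

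Finally we pass from equal unit balls to equal norms. Since $L$ is linear and bijective, $L(B_p)=B_q$ means that $p(x)\leq1$ exactly when $q(Lx)\leq1$. Both norms are Minkowski functionals of their unit balls, so homogeneity gives $q(Lx)=p(x)$ for every $x\in E$. Thus $L$ is a surjective linear isometry $(E,p)\to(E,q)$.

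The main obstacle is the boundedness step, which secures a convergent subsequence and hence a nondegenerate limit. It goes through only because the estimate from \Cref{thm:recovery-certificate} runs in the direction $q(L_kx)\lesssim p(x)$. Nondegeneracy of the limit is then automatic from the exact determinant identity; no uniform lower bound on $L_k$ is required. The remaining steps are elementary convexity and measure-theoretic arguments.
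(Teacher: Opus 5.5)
Your proof is correct and follows the paper's argument essentially step for step. Both take a bounded sequence $L_{\varepsilon_k}$ from \Cref{thm:recovery-certificate}, pass to a convergent subsequence whose limit satisfies $L(B_p)\subseteq B_q$ and $v_p\abs{\det L}=v_q$, and use the equal-volume argument to force $L(B_p)=B_q$. The differences are cosmetic: you find an interior point of $B_q\setminus L(B_p)$ by radial scaling rather than invoking $B_q=\overline{\operatorname{int}B_q}$, and you finish via Minkowski functionals rather than the two-sided operator-norm sandwich.
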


\begin{proof}
For $k\geq0$, set $\varepsilon_k=(k+2)^{-1}$ and choose $L_k=L_{\varepsilon_k}$ from \Cref{thm:recovery-certificate}. Since $\varepsilon_k\leq1/2$,
\[
  \norm{L_k}_{p\to q}\leq(1-\varepsilon_k)^{-1}\leq2.
\]
The operator space is finite-dimensional, so a subsequence converges in operator norm to a linear map $L$. Along this subsequence, for every $x\in E$,
\begin{equation}\label{eq:limit-contraction}
  q(Lx)=\lim_k q(L_kx)
  \leq\lim_k\frac{p(x)}{1-\varepsilon_k}=p(x).
\end{equation}
Continuity of the determinant gives
\begin{equation}\label{eq:limit-volume}
  v_p\abs{\det L}=\lim_k v_p\abs{\det L_k}=v_q>0,
\end{equation}
so $L$ is invertible.

By \eqref{eq:limit-contraction}, $L(B_p)\subseteq B_q$. Linear change of variables \cite[Theorem~3.9]{EvansGariepy2015} and \eqref{eq:limit-volume} yield
\[
  \cL^n(L(B_p))=\abs{\det L}\,v_p=v_q=\cL^n(B_q).
\]
Both sets are compact and convex, and $B_q$ has nonempty interior. Since $B_q=\overline{\operatorname{int}B_q}$, a proper inclusion would make the open set $\operatorname{int}B_q\setminus L(B_p)$ nonempty. That set would contain a ball of positive measure, contradicting the equality of volumes. Hence $L(B_p)=B_q$.

It follows that $\norm{L}_{p\to q}\leq1$ and $\norm{L^{-1}}_{q\to p}\leq1$. Therefore
\[
  p(x)=p(L^{-1}Lx)\leq q(Lx)\leq p(x)
  \qquad(x\in E),
\]
which proves that $L$ is an isometry.
\end{proof}

We return to the original normed spaces to complete the argument.

\begin{proof}[Proof of \Cref{thm:main}]
The zero-dimensional case was settled in \Cref{sec:preliminaries}. In positive dimension, take the coordinate isomorphisms $e_X,e_Y$ and the norms $p,q$ from \eqref{eq:model-norms}. The induced sphere isometry $\delta:S_p\to S_q$ gives equality of the contraction bodies by \Cref{thm:body-equality}. Hence \Cref{thm:coordinate-rigidity} supplies a surjective linear isometry $L:(E,p)\to(E,q)$. The required isometry is
\[
  T=e_Y\circ L\circ e_X^{-1}:X\longrightarrow Y.\qedhere
\]
\end{proof}

\section{Further questions}\label{sec:tingley}

Let $T:X\to Y$ be a linear isometry supplied by \Cref{thm:main}. Then
\[
  T^{-1}\circ\Delta:S_X\longrightarrow S_X
\]
is a self-isometry of the metric unit sphere. Thus, in finite dimensions, the marked extension problem may be separated into two logically distinct parts. The present theorem identifies the ambient space up to linear isometry. What remains is an automorphism problem for a fixed normed space: determine whether every self-isometry of $S_X$ is induced by a linear isometry of $X$. The construction in this paper gives no pointwise relation between $T$ and the original map $\Delta$.

For class-specific extension results and structural viewpoints, see \cite{CabelloSanchez2019,Peralta2018}. Results for self-isometries, such as those for Schreier spaces and their $p$-convexifications \cite{Fakhoury2025}, illustrate the role of additional structure in this remaining problem.

Applied to the underlying real spaces, \Cref{thm:main} also gives real-linear isometric equivalence for finite-dimensional complex normed spaces. A genuinely complex refinement would need to address compatibility with complex scalar multiplication. In infinite dimensions, new substitutes for the Euclidean differentiability, volume, and operator compactness arguments would be required. The metric throughout remains the ambient norm metric from \eqref{eq:ambient-sphere-metric}; intrinsic and geodesic metrics are different invariants.

A quantitative problem is to obtain, in each fixed dimension, an explicit modulus relating the distortion of a sphere map to the Banach--Mazur distance between the ambient spaces, and to determine its dependence on the dimension. Combining object-level rigidity with further information about the self-isometry group of a fixed sphere may also lead to marked extension results.

\section*{Research methods and formalization}\label{sec:formalization}

OpenAI ChatGPT (GPT-5.6 Sol and GPT-6 Astra) was used in developing the proof to generate candidate proof strategies and mathematical arguments, and to assess and refine them. This use was not limited to language editing. The authors critically examined the arguments, discussed their mathematical interpretation, and revised the proofs and exposition. The proofs given here can be checked without access to the AI tools.

A Lean~4 formalization of sphere rigidity, using mathlib, is available in MathlibAnnex, version~0.4.0 \cite{MathlibAnnex040}. It establishes that finite-dimensional real normed spaces with isometric unit spheres are linearly isometric.\footnote{The function-form statement corresponding to \Cref{thm:main} is in \href{https://github.com/r-tanaka-math/mathlib-annex/blob/v0.4.0/MathlibAnnex/Analysis/Normed/Sphere/MetricRigidity.lean}{\texttt{MetricRigidity.lean}} in that release.} An overview of the formal development and its dependencies is provided on the companion project page \cite{SphereRigidityFormalizationProject}.

OpenAI ChatGPT and OpenAI Codex-based tools, both using the GPT-5.6 Sol and GPT-6 Astra models, were used to propose, organize, and revise candidate Lean code and proof obligations in the formalization workflow. The formal result can be checked using the released Lean source. Formal proof checking by Lean is distinct from AI generation of candidate code and from the authors' mathematical review of this manuscript.

\section*{Declaration of generative AI and AI-assisted technologies in the manuscript preparation process}
During the preparation of this work, the authors used OpenAI ChatGPT (GPT-5.6 Sol and GPT-6 Astra) for literature discovery and organization, the development and assessment of proof strategies and mathematical arguments, and drafting and revision of the manuscript. ChatGPT and OpenAI Codex-based tools, using the same models, were also used in the formalization workflow described above. After using these tools, the authors critically reviewed the AI-assisted material and revised the proofs and exposition as needed. The authors take full responsibility for the accuracy, integrity, and final content of this article.


\begin{thebibliography}{99}

\bibitem{BallCurrieOlver1981}
J.~M. Ball, J.~C. Currie, and P.~J. Olver.
\newblock Null Lagrangians, weak continuity, and variational problems of
  arbitrary order.
\newblock {\em Journal of Functional Analysis}, 41(2):135--174, 1981.

\bibitem{Banakh2022}
T.~Banakh.
\newblock Every 2-dimensional {Banach} space has the {Mazur--Ulam} property.
\newblock {\em Linear Algebra and its Applications}, 632:268--280, 2022.

\bibitem{CabelloSanchez2019}
J.~Cabello~S{\'a}nchez.
\newblock A reflection on {Tingley's} problem and some applications.
\newblock {\em Journal of Mathematical Analysis and Applications},
  476(2):319--336, 2019.

\bibitem{CabezasEtAl2022}
D.~Cabezas, M.~Cueto-Avellaneda, D.~Hirota, T.~Miura, and A.~M. Peralta.
\newblock Every commutative {JB}$^*$-triple satisfies the complex
  {Mazur--Ulam} property.
\newblock {\em Annals of Functional Analysis}, 13:60, 2022.

\bibitem{CamposJimenezGarciaPacheco2021}
A.~Campos-Jim{\'e}nez and F.~J. Garc{\'i}a-Pacheco.
\newblock Geometric invariants of surjective isometries between unit spheres.
\newblock {\em Mathematics}, 9(18):2346, 2021.

\bibitem{Conway1990}
J.~B. Conway.
\newblock {\em A Course in Functional Analysis}, volume~96 of {\em Graduate
  Texts in Mathematics}.
\newblock Springer, New York, second edition, 1990.

\bibitem{EvansGariepy2015}
L.~C. Evans and R.~F. Gariepy.
\newblock {\em Measure Theory and Fine Properties of Functions}.
\newblock CRC Press, Boca Raton, revised edition, 2015.

\bibitem{Fakhoury2025}
M.~Fakhoury.
\newblock {Tingley's} problem for {Schreier} spaces and their
  {$p$}-convexifications.
\newblock {\em Journal of Functional Analysis}, 289(10):111122, 2025.

\bibitem{Federer1969}
H.~Federer.
\newblock {\em Geometric Measure Theory}, volume 153 of {\em Grundlehren der
  mathematischen Wissenschaften}.
\newblock Springer, 1969.

\bibitem{Gruber2007}
P.~M. Gruber.
\newblock {\em Convex and Discrete Geometry}, volume 336 of {\em Grundlehren
  der mathematischen Wissenschaften}.
\newblock Springer, Berlin, 2007.

\bibitem{Hatori2022}
O.~Hatori.
\newblock The {Mazur--Ulam} property for uniform algebras.
\newblock {\em Studia Mathematica}, 265(2):227--239, 2022.

\bibitem{HenclMaly2010}
S.~Hencl and J.~Mal{\'y}.
\newblock Jacobians of Sobolev homeomorphisms.
\newblock {\em Calculus of Variations and Partial Differential Equations},
  38:233--242, 2010.

\bibitem{KadetsMartin2012}
V.~Kadets and M.~Mart{\'i}n.
\newblock Extension of isometries between unit spheres of finite-dimensional
  polyhedral {Banach} spaces.
\newblock {\em Journal of Mathematical Analysis and Applications},
  396(2):441--447, 2012.

\bibitem{KupfermanShachar2019}
R.~Kupferman and A.~Shachar.
\newblock A geometric perspective on the {Piola} identity in {Riemannian}
  settings.
\newblock {\em Journal of Geometric Mechanics}, 11(1):59--76, 2019.

\bibitem{Mankiewicz1972}
P.~Mankiewicz.
\newblock On extension of isometries in normed linear spaces.
\newblock {\em Bulletin de l'Acad{\'e}mie Polonaise des Sciences, S{\'e}rie des
  Sciences Math{\'e}matiques, Astronomiques et Physiques}, 20:367--371, 1972.

\bibitem{MathlibAnnex040}
{MathlibAnnex}, version 0.4.0.
\newblock Source release, 2026.
\newblock
  \url{https://github.com/r-tanaka-math/mathlib-annex/releases/tag/v0.4.0}.
  Entry module: \texttt{MathlibAnnex.\allowbreak Projects.\allowbreak
  SphereRigidity}.

\bibitem{MazurUlam1932}
S.~Mazur and S.~Ulam.
\newblock Sur les transformations isom{\'e}triques d'espaces vectoriels
  norm{\'e}s.
\newblock {\em Comptes Rendus de l'Acad{\'e}mie des Sciences de Paris},
  194:946--948, 1932.

\bibitem{McShane1934}
E.~J. McShane.
\newblock Extension of range of functions.
\newblock {\em Bulletin of the American Mathematical Society}, 40(12):837--842,
  1934.

\bibitem{MoriOzawa2020}
M.~Mori and N.~Ozawa.
\newblock {Mankiewicz's} theorem and the {Mazur--Ulam} property for
  {$C^*$}-algebras.
\newblock {\em Studia Mathematica}, 250(3):265--281, 2020.

\bibitem{Peralta2018}
A.~M. Peralta.
\newblock A survey on {Tingley's} problem for operator algebras.
\newblock {\em Acta Scientiarum Mathematicarum}, 84:81--123, 2018.

\bibitem{PeraltaSvarc2025}
A.~M. Peralta and R.~\v{S}varc.
\newblock A strengthened {Kadison's} transitivity theorem for unital
  {JB}$^*$-algebras with applications to the {Mazur--Ulam} property.
\newblock {\em Analysis and Mathematical Physics}, 15:91, 2025.

\bibitem{Rademacher1919}
H.~Rademacher.
\newblock {\"U}ber partielle und totale Differenzierbarkeit von Funktionen
  mehrerer Variabeln und {\"u}ber die Transformation der Doppelintegrale.
\newblock {\em Mathematische Annalen}, 79:340--359, 1919.

\bibitem{Rudin1991}
W.~Rudin.
\newblock {\em Functional Analysis}.
\newblock McGraw-Hill, New York, second edition, 1991.

\bibitem{SphereRigidityBrief2026}
{Sphere Rigidity}.
\newblock Exact Mathematics with AI: Brief Report, Draft R2, 2026.
\newblock
  \url{https://exactmathematics.org/reports/sphere-rigidity/draft-r2/sphere-rigidity-brief-report.pdf}.

\bibitem{SphereRigidityFormalizationProject}
{Sphere Rigidity}: Companion project page.
\newblock Exact Mathematics with AI.
\newblock
  \url{https://exactmathematics.org/mathlibannex/projects/sphere-rigidity/}.

\bibitem{Tanaka2014}
R.~Tanaka.
\newblock A further property of spherical isometries.
\newblock {\em Bulletin of the Australian Mathematical Society},
  90(2):304--310, 2014.

\bibitem{Tanaka2017}
R.~Tanaka.
\newblock Spherical isometries of finite-dimensional {$C^*$}-algebras.
\newblock {\em Journal of Mathematical Analysis and Applications},
  445(1):337--341, 2017.

\bibitem{Taylor1947}
A.~E. Taylor.
\newblock A geometric theorem and its application to biorthogonal systems.
\newblock {\em Bulletin of the American Mathematical Society},
  53(6):614--616, 1947.

\bibitem{Tingley1987}
D.~Tingley.
\newblock Isometries of the unit sphere.
\newblock {\em Geometriae Dedicata}, 22:371--378, 1987.

\end{thebibliography}
\end{document}